\documentclass[11pt]{article}
\usepackage[T1]{fontenc}
\usepackage[utf8]{inputenc}
\usepackage{lmodern}
\usepackage{amsmath,amssymb,amsthm,mathtools}
\usepackage[margin=1in]{geometry}
\usepackage{microtype}
\usepackage{xcolor}
\definecolor{linkcol}{rgb}{0.10,0.20,0.45}
\usepackage[colorlinks=true,linkcolor=linkcol,citecolor=linkcol,
            urlcolor=linkcol]{hyperref}

\pdftrailerid{}

\theoremstyle{plain}
\newtheorem{theorem}{Theorem}[section]
\newtheorem{lemma}[theorem]{Lemma}
\newtheorem{proposition}[theorem]{Proposition}
\newtheorem{corollary}[theorem]{Corollary}
\theoremstyle{definition}

\theoremstyle{remark}

\allowdisplaybreaks
\title{\vspace{-2em}Mockenhaupt's Three-Term Hardy--Littlewood\\
       Majorant Conjecture}

\newcommand{\afn}[1]{\textsuperscript{\normalfont #1}}
\author{%
\normalsize
\begin{tabular}{c@{\hspace{2.2em}}c@{\hspace{2.2em}}c}
\large Guancheng Pan\afn{1} & \large Chengsong You\afn{1} &
  \large Hengyu Wang\afn{1}
\end{tabular}\\[0.5ex]
\begin{tabular}{c@{\hspace{2.2em}}c@{\hspace{2.2em}}c}
\large Junwei Zhou\afn{2,\dag} & \large Wenjun Zhang\afn{1} &
  \large Yongchao Chen\afn{1,\dag}
\end{tabular}\\[1.4ex]
\small
\textsuperscript{1}Apex Intelligence \quad
\textsuperscript{2}Independent Researcher\\[0.8ex]
\texttt{panguancheng@apexin.ai}, \texttt{youchengsong@apexin.ai},
\texttt{wanghengyu@apexin.ai}\\
\texttt{zhoujunwei@apexin.ai}, \texttt{zhangwenjun@apexin.ai},
\texttt{cyc@apexin.ai}\\[0.6ex]
\textsuperscript{\dag}Corresponding authors%
}
\date{8 September 2026}
\begin{document}
\maketitle
\begin{abstract}
For an integer $k \ge 0$, let $f_k(x) = 1 + e(x) + e((k+2)x)$ and $g_k(x) = 1 + e(x) - e((k+2)x)$ on $\mathbb{T} = \mathbb{R}/\mathbb{Z}$, where $e(x) = e^{2\pi ix}$. Mockenhaupt conjectured that $\|g_k\|_{L^p(\mathbb{T})} > \|f_k\|_{L^p(\mathbb{T})}$ whenever $2k < p < 2k+2$. The conjecture was previously known for $k \le 5$. We give a single analytic proof valid for every $k \ge 4$; in particular, this settles all previously open cases $k \ge 6$ and establishes the conjecture for every $k \ge 0$.

The proof reduces the norm comparison to resonant Fourier coefficients on the two-torus and represents these coefficients, after analytic continuation, by triple-Bessel integrals. Neumann's product formula and the Weber--Schafheitlin formula yield a quantitative positive lower bound for the leading mode, while the remaining odd modes are controlled by a uniform tail estimate. The leading mode is then shown to dominate the tail for every $k \ge 4$.

\textbf{AI Usage.} The mathematical argument of this paper was produced by the auto-research system Apex Math, an AI system built by Apex Intelligence. See Appendix~\ref{app:ai} for the complete AI usage statement.
\end{abstract}

\begin{center}\small
\emph{2020 Mathematics Subject Classification.}\ Primary 42A05;
Secondary 33C10, 42A16, 43A46.\\
\emph{Key words and phrases.}\ Hardy--Littlewood majorant problem; idempotent
trigonometric polynomials; Bessel functions; hypergeometric functions.
\end{center}

\section{Introduction}\label{sec:intro}

Write $e(x) = e^{2\pi ix}$ and $\mathbb{T} = \mathbb{R}/\mathbb{Z}$, equipped with normalized Lebesgue measure. For $0 < p < \infty$, we use

\[
  \|h\|_{L^p(\mathbb{T})} := \left( \int_{\mathbb{T}} |h(x)|^p dx \right)^{1/p} ,
\]
with the usual quasi-norm interpretation when $p < 1$.

The Hardy--Littlewood majorant problem asks whether

\[
  \left\| \sum_{n \in \Lambda } a_n e(nx) \right\|_{L^p(\mathbb{T})} \le \left\| \sum_{n \in \Lambda } b_n e(nx) \right\|_{L^p(\mathbb{T})}
\]
whenever $\Lambda \subset \mathbb{Z}$ is finite and $|a_n| \le b_n$ for every $n \in \Lambda$. For even integers $p = 2m$, the inequality follows from Parseval's identity applied to the $m$-th powers. For non-even exponents it fails in general; see~\cite{r2,r3,r4,r7}.

Montgomery conjectured that counterexamples also exist when the majorant is idempotent, meaning that its Fourier coefficients belong to $\{0,1\}$, and the comparison polynomial is obtained by changing some of the nonzero coefficients to $-1$. Mockenhaupt and Schlag established the existence of such counterexamples using four-term idempotent polynomials~\cite{r7}. Three is the smallest possible number of terms in such a counterexample: after multiplication by a character, a two-term idempotent has the form $1 + e(Mx)$, with $M \ne 0$, and

\[
  1 - e(Mx) = 1 + e\left( M \left( x + \frac{1}{2M} \right) \right) .
\]
Thus changing the relative sign preserves every $L^p$ norm.

The pair $1 + e(x) \pm e(3x)$ already appears in the work of Hardy and Littlewood at $p = 3$; see also~\cite{r4}. Mockenhaupt proposed that the same three-term pattern works between each pair of consecutive even exponents. More precisely, for an integer $k \ge 0$, set

\[
  N := k+2, \qquad  f_k(x) := 1 + e(x) + e(Nx), \qquad  g_k(x) := 1 + e(x) - e(Nx) .
\]
The conjecture is that

\begin{equation}\label{eq:conj}
  \|g_k\|_{L^p(\mathbb{T})} > \|f_k\|_{L^p(\mathbb{T})} \qquad  (2k < p < 2k+2) .
\end{equation}
The two coefficient sequences have the same absolute values and differ by a single sign.

The choice $N = k+2$ is tied to the exponent interval. For an integer $1 \le m < N$, every exponent in $(1 + z \pm z^N)^m$ has a unique representation $j + N\ell$, with $j, \ell \ge 0$ and $j + \ell \le m$. Indeed, equality of two such representations implies

\[
  N|\ell - \ell '| = |j - j'| \le m < N ,
\]
so $\ell = \ell '$ and $j = j'$. Changing the sign of $z^N$ therefore preserves the absolute values of the coefficients of the $m$-th power. Parseval's identity gives

\[
  \|f_k\|_{L^{2m}(\mathbb{T})} = \|g_k\|_{L^{2m}(\mathbb{T})} , \qquad  1 \le m < N .
\]
At $m = N$, the first collision occurs: the exponent $N$ can arise either from a single factor $z^N$ or from $N$ factors $z$. For $k \ge 1$, the smallest third frequency for which both endpoints $2k$ and $2k+2$ lie in the no-collision range is therefore $N = k+2$. The conjecture is thus extremal in the number of terms and, in this sense, in the placement of the third frequency.

Krenedits proved the cases $k = 0,1,2$ in~\cite{r4}, the cases $k = 3,4$ in~\cite{r5}, and the case $k = 5$ in~\cite{r6}. The case $k = 0$ admits an analytic proof, while the cases $1 \le k \le 5$ rely on rigorous numerical estimates. We give a single analytic argument valid for every $k \ge 4$. In particular, this settles all previously open cases $k \ge 6$ and completes the proof of Mockenhaupt's conjecture.

\begin{theorem}\label{res:1.1}
Let $k \ge 4$ be an integer. For every $2k < p < 2k+2$,

\[
  \|1 + e(x) - e((k+2)x)\|_{L^p(\mathbb{T})} > \|1 + e(x) + e((k+2)x)\|_{L^p(\mathbb{T})} .
\]
\end{theorem}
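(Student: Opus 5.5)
We first lift the comparison to the two-torus. Write $F_\pm(x,y) := 1 + e(x) \pm e(y)$ on $\mathbb{T}^2$, and set
\[
\Phi_s(x,y) := |1+e(x)+e(y)|^{2s}.
\]
Then $|f_k(x)|^p = \Phi_{p/2}(x, Nx)$ and $|g_k(x)|^p = \Phi_{p/2}(x, Nx+\tfrac12)$.

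We expand $\Phi_s$ in its Fourier series $\sum_{(a,b)} c_{a,b}(s)\, e(ax+by)$; it converges absolutely for $s$ in the relevant range, since $\Phi_s$ has only conical zeros at $(1/3,2/3)$ and $(2/3,1/3)$. Integrating along the line $y = Nx$ keeps only the resonant modes $a + Nb = 0$, that is $(a,b) = (-Nm, m)$. The half-period shift multiplies mode $m$ by $(-1)^m$. Hence
\[
\|g_k\|_p^p - \|f_k\|_p^p = -2\sum_{m\ \mathrm{odd}} c_{-Nm,m}(p/2) = -4\sum_{m\ge1,\ m\ \mathrm{odd}} c_{-Nm,m}(p/2),
\]
using the symmetry $c_{a,b} = c_{-a,-b}$. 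So the theorem reduces to showing
\[
\sum_{m\ \mathrm{odd} > 0} c_{-Nm,m}(s) < 0 \qquad \text{for } k < s < k+1.
\]

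To compute the coefficients, we use the Bessel representation of $|w|^{2s}$. For $-1 < \mathrm{Re}\, s < 0$,
\[
|w|^{2s} = C_s \int_0^\infty r^{-2s-1}\bigl(1 - J_0(r|w|)\bigr)\,dr,
\]
with the standard Fourier formula for the planar vector $w = 1 + e(x) + e(y)$. Averaging $e^{i\langle \xi, w\rangle}$ over $x,y$ factorises by Jacobi–Anger into three Bessel functions, giving
\[
c_{a,b}(s) = C_s \int_0^\infty r^{-2s-1} J_0(r)\, J_{a}(r)\, J_{b}(r)\,dr
\]
up to signs and phases. We then continue this analytically in $s$ to $k < s < k+1$, subtracting Taylor terms as needed; these only affect finitely many modes, which vanish for $|m|$ large relative to $s$. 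The prefactor $C_s$ involves $\Gamma(1+s)/\Gamma(-s)$, and its sign is $(-1)^{k+1}$ on the interval $(k, k+1)$. This sign alternation is exactly what must match the sign of the Bessel integral.

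For the leading mode $m = 1$, we need $\int r^{-2s-1} J_0 J_N J_1\,dr$. We apply Neumann's product formula to write $J_N J_1$ as a series or integral of single Bessel functions $J_{N+1+2j}$. Each resulting integral $\int r^{-2s-1} J_0 J_\nu\,dr$ is evaluated in closed form by Weber–Schafheitlin as a gamma-ratio times a ${}_2F_1$ at $1$. The leading term $j=0$ should carry the correct sign, with size roughly $\Gamma$-ratio $\asymp \Gamma(N-s)\ldots$, comparable to $c^{N}/N!$-type quantities for $s$ near $k = N-2$. We bound the remaining $j$-terms by a geometric factor to obtain an explicit positive lower bound $|c_{-N,1}(s)| \ge L_k(s)$ with the right sign.

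For the tail, the modes $m \ge 3$ have Bessel orders of size $Nm$. The crude bound $|J_\nu(r)| \le (r/2)^\nu/\Gamma(\nu+1)$ for small $r$, together with $|J_\nu| \le 1$ and $|J_\nu(r)| \le C r^{-1/3}$ for large $r$, split at $r \approx Nm$, gives $|c_{-Nm,m}(s)| \le T_m(s)$, which decays superexponentially in $m$. The main obstacle is the comparison $\sum_{m \ge 3 \text{ odd}} T_m < L_k$ uniformly for $s \in (k, k+1)$, and in particular near the endpoints, where $C_s$ vanishes. We would handle this by factoring out $1/\Gamma(-s)$, which is common to all modes, so the comparison is between the smooth remaining factors. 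Both sides then look like ratios of gammas in $N$. Showing that the ratio tends to zero with enough margin already at $k = 4$ requires careful explicit constants, likely via Stirling with explicit error terms, and possibly a finite check of the constants at $k = 4, 5$.
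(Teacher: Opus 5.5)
\textbf{The coefficient formula is wrong, and the rest of the plan is built on it.} Your reduction to $-4\sum_{m\ \mathrm{odd}>0}c_{-Nm,m}(s)$ is correct and agrees with the paper. The triple-Bessel formula that follows is not.

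Write $\xi=\rho e^{i\phi}$ and average $e^{i\langle\xi,w\rangle}$ over $x,y$. The factors from $e(x)$ and $e(y)$ give $i^aJ_a(\rho)e^{-ia\phi}$ and $i^bJ_b(\rho)e^{-ib\phi}$. The phase $e^{-i(a+b)\phi}$ is not a harmless ``phase up to signs''. Integrated against $e^{i\rho\cos\phi}$ (the constant term $1$), it produces $2\pi i^{a+b}J_{a+b}(\rho)$, not $2\pi J_0(\rho)$. So the correct identity is $c_{a,b}(s)=C(s)(-1)^{a+b}\int_0^\infty J_aJ_bJ_{a+b}\,r^{-2s-1}\,dr$. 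For the resonant mode $(-Nm,m)$ the three orders are $Nm,(N-1)m,m$, not $0,Nm,m$.

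You can check that your version cannot hold. Since $J_0J_NJ_1\sim (r/2)^{N+1}/N!$, your Mellin integral has a pole with nonzero residue at $s=(N+1)/2$. For even $N$ this is not cancelled by a zero of $1/\Gamma(-s)$, which contradicts the holomorphy of $s\mapsto c_{-N,1}(s)$ on $\Re s>-1$.

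The correct product vanishes to order $2Nm$ at $r=0$, because the orders sum to $2Nm$. Hence each mode's integral converges absolutely on $-3/4<\Re s<Nm$, which contains $(k,k+1)$. The continuation from the anchor strip $-1/4<\Re s<0$ is then done mode by mode, with no Taylor subtraction at all. By contrast, your $m=1$ integral $\int J_0J_NJ_1r^{-2s-1}\,dr$ diverges at $0$ for every $s\in(k,k+1)$ once $k\ge 4$. Separately, $\int r^{-2s-1}\bigl(1-J_0(r|w|)\bigr)\,dr$ converges for $0<\Re s<1$, not for $-1<\Re s<0$.

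\textbf{This error also hides the mechanism behind the leading-mode lower bound, which is the heart of the proof.} With the correct triple, the two large orders $N$ and $N-1$ are adjacent. Neumann's formula $J_N(r)J_{N-1}(r)=\frac{2}{\pi}\int_0^{\pi/2}J_{2N-1}(2r\cos\vartheta)\cos\vartheta\,d\vartheta$ therefore carries the nonnegative weight $\cos\vartheta$. Weber--Schafheitlin then evaluates $\int_0^\infty J_{2N-1}(Ar)J_1(r)r^{-2s-1}\,dr$ in closed form:
\begin{itemize}
\item for $A<1$ this is nonnegative, since the upper parameters are $2-\alpha$ and $1-\alpha\ge 0$;
\item for $1<A<2$, after Euler's transformation, it has an explicit positive lower bound.
\end{itemize}
Applying Neumann to $J_NJ_1$, as you propose, gives the weight $\cos((N-1)\vartheta)$, which changes sign. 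Nothing in your plan supports the hope that the $j=0$ term carries the sign and dominates the remaining terms. So the positive lower bound $L_k(s)$ is missing. The tail estimate and the final comparison would also have to be redone for the correct integrals. The paper handles that comparison by showing the ratio is decreasing in $\alpha$, bounding it explicitly at $k=4$, and proving $R(k+1,0)/R(k,0)<4/5$.
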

Combining this with the known small cases gives the full conjecture.

\begin{corollary}\label{res:1.2}
Mockenhaupt's conjecture holds for every integer $k \ge 0$.
\end{corollary}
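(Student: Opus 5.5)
The statement to be proved is Corollary~\ref{res:1.2}: Mockenhaupt's conjecture \eqref{eq:conj} holds for every integer $k \ge 0$. The plan is to combine Theorem~\ref{res:1.1} with the previously established small cases, splitting the integers $k \ge 0$ into two ranges.

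For $k \ge 4$, Theorem~\ref{res:1.1} gives exactly \eqref{eq:conj} with $N = k+2$: for every $p$ with $2k < p < 2k+2$ we have $\|g_k\|_{L^p(\mathbb{T})} > \|f_k\|_{L^p(\mathbb{T})}$. This range includes all previously open cases $k \ge 6$, and it also recovers $k = 4,5$.

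For $0 \le k \le 3$, we invoke the known results quoted in the introduction. Krenedits proved $k = 0,1,2$ in~\cite{r4} and $k = 3,4$ in~\cite{r5}, and $k = 5$ is treated in~\cite{r6}, although the latter is not needed here. The one point to check is that the polynomials and exponent ranges in those papers agree with the present normalization. They must concern $1 + e(x) \pm e((k+2)x)$ on the open interval $(2k, 2k+2)$ with normalized Lebesgue measure. Rescaling the measure multiplies both norms by the same constant, so the strict inequality is unaffected by the normalization. If a cited paper states the result for a sign-flipped or translated variant, translating $x$ and multiplying by a unimodular character again leave all $L^p$ norms unchanged.

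Taking the union of the two ranges $\{0,1,2,3\}$ and $\{k \ge 4\}$ covers every integer $k \ge 0$, which proves the corollary. There is no real obstacle in this step. All the analytic content lies in Theorem~\ref{res:1.1}, and the only care needed is the bookkeeping above confirming that the cited small cases are stated in the same form as \eqref{eq:conj}.
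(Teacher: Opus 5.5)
Your proposal is correct and follows essentially the same route as the paper: Krenedits' published results cover $k=0,1,2$ and $k=3$, and Theorem~\ref{res:1.1} covers all $k \ge 4$, so together every $k \ge 0$ is handled. Your remarks on measure normalization and on translations and unimodular characters are harmless bookkeeping that the paper leaves implicit.
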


\begin{proof}
The cases $k = 0,1,2$ follow from~\cite{r4}, and $k = 3$ follows from~\cite{r5}. Theorem~\ref{res:1.1} covers all $k \ge 4$.
\end{proof}

\textbf{Proof strategy.} Write $s = p/2 = k + \alpha$, with $0 < \alpha < 1$, and set

\[
  D(s) := \int_{\mathbb{T}} \left( |g_k(x)|^{2s} - |f_k(x)|^{2s} \right) dx .
\]
We must prove $D(s) > 0$. Let $c_s(m,n)$ denote the Fourier coefficients of

\[
  H(\theta ,\psi )^s := |1 + e(\theta ) + e(\psi )|^{2s} .
\]
Restriction to $(\theta ,\psi ) = (x, Nx)$ selects the modes $m + Nn = 0$. A shift by $1/2$ in the second variable changes the sign of precisely the odd modes, giving

\[
  D(s) = -4 \sum_{\substack{q \ge 1 \\ q \,\mathrm{odd}}} c_s(-Nq, q) .
\]

For $q \ge 1$, define

\[
  I_q(s) := \int_{0}^{\infty } J_{qN}(r) J_{q(N-1)}(r) J_q(r) r^{-2s-1} dr .
\]
The Mellin representation of $J_{0}$, followed by analytic continuation for each fixed mode, yields

\begin{equation}\label{eq:strategy}
  D(s) = \frac{2^{2s+3}\Gamma (s+1)^{2} \sin (\pi \alpha )}{\pi } \sum_{\substack{q \ge 1 \\ q \,\mathrm{odd}}} I_q(s) .
\end{equation}
The prefactor is positive. It remains to show that the leading mode dominates the sum of the remaining odd modes.

The leading mode $I_{1}$ is distinguished by the fact that its two large Bessel orders are adjacent. Neumann's product formula combines them with the nonnegative angular weight $\cos \vartheta$. The resulting integral is evaluated by the Weber--Schafheitlin formula, and Euler's transformation makes its positivity explicit. This gives a quantitative lower bound $I_{1}(s) \ge W_{\mathrm{low}}(k,\alpha ) > 0$. For the other odd modes, elementary Bessel bounds give

\[
  \sum_{\substack{q \ge 3 \\ q \,\mathrm{odd}}} |I_q(s)| \le W_{\mathrm{tail}}(k,\alpha ) .
\]
The explicit bounds satisfy

\[
  \frac{W_{\mathrm{tail}}(k,\alpha )}{W_{\mathrm{low}}(k,\alpha )} < \frac{3}{4} \left( \frac{4}{5} \right)^{k-4} < 1 , \qquad  k \ge 4, \quad  0 \le \alpha \le 1 .
\]
This proves $D(s) > 0$ throughout the required interval.

Throughout the paper, $X \asymp Y$ means that $cY \le X \le CY$ for some constants $c, C > 0$.

\section{From the majorant problem to resonant Fourier modes}\label{sec:reduction}

Fix

\[
  k \ge 4, \qquad  N := k+2 .
\]
For real $s > 0$, set

\[
  A(x) := |f_k(x)|^{2} , \qquad  B(x) := |g_k(x)|^{2} ,
\]
and

\[
  D(s) := \int_{\mathbb{T}} \left( B(x)^s - A(x)^s \right) dx .
\]
In the range relevant to Theorem~\ref{res:1.1}, we write

\[
  s = \frac{p}{2} = k + \alpha , \qquad  0 < \alpha < 1 .
\]
Then

\begin{equation}\label{eq:Dnorm}
  D(s) = \|g_k\|_{L^p(\mathbb{T})}^{p} - \|f_k\|_{L^p(\mathbb{T})}^{p} .
\end{equation}

Define

\[
  P(\theta ,\psi ) := 1 + e(\theta ) + e(\psi ) , \qquad  H(\theta ,\psi ) := |P(\theta ,\psi )|^{2} , \qquad  (\theta ,\psi ) \in \mathbb{T}^{2} .
\]
Then

\begin{equation}\label{eq:curves}
  A(x) = H(x, Nx) , \qquad  B(x) = H\left( x, Nx + \frac{1}{2} \right) .
\end{equation}

To justify restricting the Fourier series of $H^s$ to the curves in~\eqref{eq:curves} and integrating it term by term, we first record the local behavior of $H$ at its zeros. Near a point of $\mathbb{T}^{2}$, distances are understood in local Euclidean coordinates.

\begin{lemma}\label{res:2.1}
The zeros of $H$ are

\[
  z_{+} = \left( \frac{1}{3}, \frac{2}{3} \right) , \qquad  z_{-} = \left( \frac{2}{3}, \frac{1}{3} \right) .
\]
Near either zero,

\begin{equation}\label{eq:quadzero}
  H(\theta ,\psi ) \asymp |(\theta ,\psi ) - z_\pm |^{2} .
\end{equation}
Consequently, for every real $\sigma$,

\begin{equation}\label{eq:L1crit}
  H^\sigma \in L^{1}(\mathbb{T}^{2}) \quad  \Longleftrightarrow \quad  \sigma > -1 .
\end{equation}
\end{lemma}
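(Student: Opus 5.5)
We find the zeros of $H$ first, then expand $H$ to second order around each zero, and finally read off the integrability range from the quadratic behaviour.

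\textbf{Locating the zeros.} Write $u=e(\theta)$ and $v=e(\psi)$, both unimodular. The condition $H=0$ means $1+u+v=0$. Then $|1+u|=|v|=1$, so the unit vectors $1$ and $u$ must have a sum of length one. This forces the angle between them to be $2\pi/3$, so $u=e(\pm 1/3)$. Correspondingly $v=-1-u=e(\mp 1/3)$. These two choices give exactly $z_+=(1/3,2/3)$ and $z_-=(2/3,1/3)$, and each does satisfy $1+u+v=0$.

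\textbf{Quadratic behaviour near a zero.} Near $z_\pm$, write $(\theta,\psi)=z_\pm+(h_1,h_2)$. Since $P$ is smooth, Taylor's theorem gives
\[
P = 2\pi i\bigl(e(\theta_0)h_1+e(\psi_0)h_2\bigr)+O(|h|^2),
\]
where $(\theta_0,\psi_0)=z_\pm$. The real-linear map $(h_1,h_2)\mapsto e(\theta_0)h_1+e(\psi_0)h_2$ from $\mathbb{R}^2$ to $\mathbb{C}\cong\mathbb{R}^2$ is invertible. Indeed, its columns $e(\theta_0)$ and $e(\psi_0)$, namely $e(1/3)$ and $e(2/3)$ in some order, are not real multiples of each other: $e(1/3)/e(2/3)=e(-1/3)\notin\mathbb{R}$.

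An invertible linear map is bounded above and below, so $|L h|\asymp|h|$. Hence for $|h|$ small the error term is absorbed and $|P|\asymp|h|$. Squaring gives \eqref{eq:quadzero}.

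\textbf{Integrability criterion.} The continuous function $H$ vanishes only at $z_\pm$, so it is bounded below by a positive constant away from small discs around these two points. There $H^\sigma$ is bounded for every real $\sigma$, and integrability is decided entirely by the discs.

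On a disc of radius $\delta$ around $z_\pm$, \eqref{eq:quadzero} gives $H^\sigma\asymp|h|^{2\sigma}$, with constants depending on the sign of $\sigma$ but still valid. In polar coordinates,
\[
\int_0^\delta r^{2\sigma}\,r\,dr<\infty \iff 2\sigma+1>-1 \iff \sigma>-1.
\]
This proves \eqref{eq:L1crit}.

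None of these steps is a real obstacle. The only point needing care is the non-degeneracy check: the two gradient directions $e(\theta_0)$ and $e(\psi_0)$ must be $\mathbb{R}$-linearly independent, which is exactly the computation $e(-1/3)\notin\mathbb{R}$ above.
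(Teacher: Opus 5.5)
Your proof is correct and follows essentially the same route as the paper. Both locate the two zeros, check that the real Jacobian of $P$ at $z_\pm$ is nonsingular, deduce $H \asymp |h|^{2}$, and finish with the polar-coordinate integral $\int_0^\delta r^{2\sigma+1}\,dr$. The paper computes the determinant $\pm 2\sqrt{3}\pi^{2}$, while you verify the equivalent fact that $e(\theta_0)$ and $e(\psi_0)$ are $\mathbb{R}$-linearly independent. You also supply two details the paper leaves implicit: why these are the only zeros, and why $H^\sigma$ is bounded away from them.
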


\begin{proof}
The equation $P(\theta ,\psi ) = 0$ gives the two stated zeros. Regarding $P$ as a map from $\mathbb{R}^{2}$ to $\mathbb{R}^{2}$, its Jacobian determinant at $z_\pm$ is $\pm 2\sqrt{3}\pi^{2}$, so $P$ is a local diffeomorphism there. There exist $c, C, \delta > 0$ such that, for each $z \in \{z_{+}, z_{-}\}$,

\[
  c|(\theta ,\psi ) - z|^{2} \le H(\theta ,\psi ) \le C|(\theta ,\psi ) - z|^{2}
\]
whenever $|(\theta ,\psi ) - z| < \delta$. Thus~\eqref{eq:quadzero} follows, and~\eqref{eq:L1crit} follows from

\[
  \int_{0}^{\delta } r^{2\sigma +1} dr < \infty  \quad  \Longleftrightarrow \quad  \sigma > -1 . \qedhere
\]
\end{proof}

The preceding local description gives the regularity needed for absolute convergence of the Fourier series of $H^s$. For $s \ge 1$ and $(m,n) \in \mathbb{Z}^{2}$, define

\[
  c_s(m,n) := \int_{\mathbb{T}^{2}} H(\theta ,\psi )^s e(-m\theta - n\psi ) d\theta d\psi .
\]

\begin{lemma}\label{res:2.2}
For every real $s \ge 1$,

\[
  \sum_{m,n \in \mathbb{Z}} |c_s(m,n)| < \infty .
\]
Consequently,

\begin{equation}\label{eq:fourierH}
  H(\theta ,\psi )^s = \sum_{m,n \in \mathbb{Z}} c_s(m,n) e(m\theta + n\psi )
\end{equation}
absolutely and uniformly on $\mathbb{T}^{2}$.
\end{lemma}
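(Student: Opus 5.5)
We prove absolute summability of $c_s(m,n)$ through a Sobolev-type regularity argument in two dimensions. The natural tool is the Cauchy--Schwarz bound
\[
  \sum_{(m,n)\neq 0} |c_s(m,n)| \le \Bigl(\sum_{(m,n)\ne 0} (m^2+n^2)^{-t}\Bigr)^{1/2} \Bigl(\sum (m^2+n^2)^{t} |c_s(m,n)|^2\Bigr)^{1/2},
\]
which is finite once $H^s \in H^t(\mathbb{T}^2)$ for some $t>1$. So the task reduces to showing that $H^s$ has slightly more than one $L^2$-derivative when $s\ge 1$.

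\textbf{Smooth region.} Away from $z_\pm$, $H$ is a positive trigonometric polynomial, hence $H^s$ is $C^\infty$ there.

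\textbf{Near the zeros.} We localize with a smooth cutoff $\chi$ supported in a $\delta$-ball around $z_\pm$. There we use the local diffeomorphism $P$ from the proof of Lemma~\ref{res:2.1}: in coordinates $w = P(\theta,\psi) \in \mathbb{R}^2$ we have $H = |w|^2$, so $H^s = |w|^{2s}$ composed with a smooth diffeomorphism.

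The function $|w|^{2s}$ is homogeneous of degree $2s\ge 2$. Its first derivatives are $O(|w|^{2s-1})$, which is bounded. Its second derivatives are $O(|w|^{2s-2})$, which is bounded and hence in $L^2$. Third derivatives are $O(|w|^{2s-3})$; these lie in $L^2_{\rm loc}(\mathbb{R}^2)$ iff $2(2s-3) > -2$, i.e. $s>1$. Thus for $s>1$ we get $\chi H^s \in H^3$, more than enough.

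\textbf{The borderline case $s=1$.} Here $H$ itself is a trigonometric polynomial, so its Fourier series is finite and the claim is trivial. To avoid the borderline altogether, one can also use $\chi H^s\in H^2$ for all $s\ge1$, since second derivatives are bounded; this already gives $t=2>1$.

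Composition with a smooth diffeomorphism and multiplication by smooth cutoffs preserve $H^2$. Hence $H^s\in H^2(\mathbb{T}^2)$, so $\sum (1+m^2+n^2)^2|c_s(m,n)|^2<\infty$. Cauchy--Schwarz against $\sum (1+m^2+n^2)^{-2}<\infty$ then gives absolute summability.

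\textbf{Uniform convergence.} Absolute summability makes the Fourier series converge uniformly, by the Weierstrass M-test, to a continuous function. That function has the same Fourier coefficients as the continuous function $H^s$, so by uniqueness of Fourier coefficients the two agree everywhere, which is \eqref{eq:fourierH}.

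\textbf{Main obstacle.} The only delicate point is verifying that the weak second derivatives of $|w|^{2s}$ at the origin contain no singular part. This holds because $|w|^{2s}\in C^1$ when $2s\ge 2$: its first derivatives are continuous, vanish at $0$, and are Lipschitz-bounded, so integration by parts against test functions on $\mathbb{R}^2\setminus B_\varepsilon$ produces boundary terms of size $O(\varepsilon^{2s-1}\cdot\varepsilon)\to 0$. The classical derivatives are therefore the weak ones.
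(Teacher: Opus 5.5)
Your proposal is correct and follows essentially the same route as the paper. Both arguments show $H^s\in H^2(\mathbb{T}^2)$ using the local coordinates $w=P$, in which $H^s=|w|^{2s}$ has second derivatives $O(|w|^{2s-2})$ that are bounded for $s\ge 1$; both then apply Cauchy--Schwarz against $\sum(1+m^2+n^2)^{-2}<\infty$ and identify the uniform limit with $H^s$ by continuity. Your added justification that the classical second derivatives are the weak ones, via the vanishing boundary terms, fills in a step the paper only asserts.
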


\begin{proof}
Set

\[
  u(\theta ,\psi ) := H(\theta ,\psi )^s .
\]
Away from the two zeros of $H$, the function $u$ is smooth. Near either zero, $P$ provides smooth local coordinates in which

\[
  u = |w|^{2s} .
\]
Since

\[
  D^{2}(|w|^{2s}) = O(|w|^{2s-2}) ,
\]
the second derivatives of $u$ are bounded near the zeros for $s \ge 1$. Therefore

\[
  u \in H^{2}(\mathbb{T}^{2}) .
\]

By the Fourier characterization of $H^{2}(\mathbb{T}^{2})$,

\[
  \sum_{m,n \in \mathbb{Z}} (1 + m^{2} + n^{2})^{2} |c_s(m,n)|^{2} < \infty .
\]
Hence, by the Cauchy--Schwarz inequality,

\begin{align*}
  \sum_{m,n \in \mathbb{Z}} |c_s(m,n)| &\le \left( \sum_{m,n \in \mathbb{Z}} (1 + m^{2} + n^{2})^{-2} \right)^{1/2} \\
  &\qquad \times \left( \sum_{m,n \in \mathbb{Z}} (1 + m^{2} + n^{2})^{2} |c_s(m,n)|^{2} \right)^{1/2} < \infty .
\end{align*}
Hence the Fourier series converges absolutely and uniformly to a continuous function $v$. On the other hand, its partial sums converge to $u$ in $L^{2}(\mathbb{T}^{2})$. Thus $u = v$ almost everywhere. Since both $u$ and $v$ are continuous, they agree everywhere. This proves~\eqref{eq:fourierH}.
\end{proof}

The absolute and uniform convergence above allows us to restrict the Fourier series of $H(\theta ,\psi )^s$ to the two curves in~\eqref{eq:curves} and integrate term by term.

\begin{proposition}[Resonant Fourier modes]\label{res:2.3}
For every real $s \ge 1$,

\begin{equation}\label{eq:resonant}
  D(s) = -4 \sum_{\substack{q \ge 1 \\ q \,\mathrm{odd}}} c_s(-Nq, q) .
\end{equation}
\end{proposition}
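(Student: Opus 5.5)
The plan is to substitute the absolutely and uniformly convergent expansion \eqref{eq:fourierH} of Lemma~\ref{res:2.2} into the two restrictions \eqref{eq:curves} and integrate term by term over $\mathbb{T}$. For $A$ this gives
\[
  \int_{\mathbb{T}} A(x)^s\,dx = \sum_{m,n} c_s(m,n) \int_{\mathbb{T}} e((m+Nn)x)\,dx = \sum_{n\in\mathbb{Z}} c_s(-Nn,n).
\]
For $B$, the shift $\psi = Nx + \tfrac12$ introduces the factor $e(n/2) = (-1)^n$, so
\[
  \int_{\mathbb{T}} B(x)^s\,dx = \sum_{n\in\mathbb{Z}} (-1)^n c_s(-Nn,n).
\]
The interchange of sum and integral is justified by the uniform convergence, or simply by $\sum|c_s|<\infty$. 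Subtracting the two expressions, the even $n$ cancel and the odd $n$ contribute twice:
\[
  D(s) = -2\sum_{n \text{ odd}} c_s(-Nn,n).
\]

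It then remains to fold the negative odd $n$ onto the positive ones, that is, to show $c_s(Nq,-q) = c_s(-Nq,q)$. Two symmetries give this.

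\emph{Reality.} Since $H^s$ is real, $c_s(-m,-n) = \overline{c_s(m,n)}$.

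\emph{Evenness.} $H(-\theta,-\psi) = |\overline{P(\theta,\psi)}|^2 = H(\theta,\psi)$. The change of variables $(\theta,\psi)\mapsto(-\theta,-\psi)$ then gives $c_s(-m,-n) = c_s(m,n)$.

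Together these show that each $c_s(m,n)$ is real and that $c_s(Nq,-q) = c_s(-Nq,q)$. The sum over odd $n\in\mathbb{Z}$ therefore equals twice the sum over odd $q\ge1$. (The term $n=0$ is even and has already cancelled.) This yields
\[
  D(s) = -4\sum_{q\ge1,\ q \text{ odd}} c_s(-Nq,q),
\]
which is the claimed identity \eqref{eq:resonant}; the rearrangement is legitimate because the series converges absolutely.

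No step here is a genuine obstacle, since Lemma~\ref{res:2.2} supplies all the analytic justification. The only point needing care is the sign bookkeeping: the shift is $+\tfrac12$ in the second variable, so the odd modes acquire the sign $-1$ in $B$. This produces the overall factor $-4$ rather than $+4$.
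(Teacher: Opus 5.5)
Your proposal is correct and follows the paper's proof essentially line by line. Both integrate the absolutely convergent expansion of Lemma~\ref{res:2.2} term by term along the two curves, cancel the even modes, and fold the negative odd modes onto the positive ones using the evenness $c_s(-m,-n)=c_s(m,n)$, which comes from $H(-\theta,-\psi)=H(\theta,\psi)$; your additional reality observation is harmless but not needed.
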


\begin{proof}
By Lemma~\ref{res:2.2},

\[
  H(\theta ,\psi )^s = \sum_{m,n \in \mathbb{Z}} c_s(m,n) e(m\theta + n\psi )
\]
absolutely and uniformly. Using~\eqref{eq:curves},

\[
  \int_{\mathbb{T}} A(x)^s dx = \sum_{m,n \in \mathbb{Z}} c_s(m,n) \int_{\mathbb{T}} e((m + Nn)x) dx = \sum_{q \in \mathbb{Z}} c_s(-Nq, q) .
\]
Similarly,

\begin{align*}
  \int_{\mathbb{T}} B(x)^s dx &= \sum_{m,n \in \mathbb{Z}} c_s(m,n) e(n/2) \int_{\mathbb{T}} e((m + Nn)x) dx \\
  &= \sum_{q \in \mathbb{Z}} (-1)^q c_s(-Nq, q) .
\end{align*}
Therefore

\[
  D(s) = -2 \sum_{\substack{q \in \mathbb{Z} \\ q \,\mathrm{odd}}} c_s(-Nq, q) .
\]
Since $H(-\theta ,-\psi ) = H(\theta ,\psi )$, we have $c_s(-m,-n) = c_s(m,n)$, and~\eqref{eq:resonant} follows.
\end{proof}

\section{Bessel representation of the resonant modes}\label{sec:bessel}

By Proposition~\ref{res:2.3}, the norm difference is determined by the resonant Fourier coefficients $c_s(-Nq, q)$. In this section we represent these coefficients by triple-Bessel integrals. The identity is first established in a strip where the Mellin representation and Fubini's theorem apply, and is then continued holomorphically, for each fixed $q$, to the positive range required below.

Throughout this section, $J_\nu$ denotes the Bessel function of the first kind of order $\nu$.

\subsection{Holomorphic dependence on the exponent}\label{sub:3.1}
For $\Re s > -1$, define

\[
  H(\theta ,\psi )^s := \exp \big( s \log H(\theta ,\psi ) \big)
\]
when $H(\theta ,\psi ) > 0$, and set $H^s = 0$ at the zeros of $H$. For $(m,n) \in \mathbb{Z}^{2}$, let

\[
  c_s(m,n) := \int_{\mathbb{T}^{2}} H(\theta ,\psi )^s e(-m\theta - n\psi ) d\theta d\psi .
\]
By Lemma~\ref{res:2.1}, this integral is absolutely convergent. The convention at the zeros does not affect its value. For real $s \ge 1$, it agrees with the definition in Section~\ref{sec:reduction}.

\begin{lemma}\label{res:3.1}
For every $(m,n) \in \mathbb{Z}^{2}$, the function

\[
  s \longmapsto c_s(m,n)
\]
is holomorphic in the half-plane

\[
  \Re s > -1 .
\]
\end{lemma}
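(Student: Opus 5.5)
We will use the standard theorem on holomorphy of parameter integrals: if $F(s,x)$ is measurable on $\Omega\times X$, holomorphic in $s$ for almost every $x$, and locally uniformly dominated by an integrable function, then $s\mapsto\int_X F(s,x)\,d\mu(x)$ is holomorphic on $\Omega$. This follows from Morera's theorem together with Fubini, or from dominated convergence applied to difference quotients. Here $X=\mathbb{T}^2$ and $\Omega=\{\Re s>-1\}$. The integrand is
\[
F(s;\theta,\psi)=H(\theta,\psi)^s e(-m\theta-n\psi).
\]
The zero set $\{z_+,z_-\}$ of $H$ has measure zero. Off this set, $s\mapsto \exp(s\log H)\,e(-m\theta-n\psi)$ is entire. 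Measurability in $(s,\theta,\psi)$ is clear, since $F$ is continuous off the null set.

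The key step is the local domination. Fix a compact set $K\subset\{\Re s>-1\}$, and choose $-1<\sigma_0\le\sigma_1<\infty$ with $\sigma_0\le \Re s\le\sigma_1$ for all $s\in K$. Since $|H^s|=H^{\Re s}$, we get
\[
|F(s;\theta,\psi)|\le \max\big(H^{\sigma_0},H^{\sigma_1}\big)\le H^{\sigma_0}+H^{\sigma_1}=:G(\theta,\psi).
\]
This holds because $t\mapsto H^t$ is monotone for fixed $H$: it is increasing if $H\ge1$ and decreasing if $H<1$. By \eqref{eq:L1crit} in Lemma~\ref{res:2.1}, both $H^{\sigma_0}$ and $H^{\sigma_1}$ lie in $L^1(\mathbb{T}^2)$, since $\sigma_0,\sigma_1>-1$. (For $\sigma_1\ge0$ one can also see this directly, because $H\le 9$ is bounded.) So $G\in L^1(\mathbb{T}^2)$ dominates $F$ uniformly for $s\in K$.

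With the domination in hand, continuity of $s\mapsto c_s(m,n)$ on $K$ follows from dominated convergence. For any closed triangle $T\subset\Omega$, Fubini applies because $\int_{\partial T}\int G<\infty$. This gives
\[
\oint_{\partial T} c_s\,ds=\int_{\mathbb{T}^2}\oint_{\partial T}F\,ds\,d\theta\,d\psi=0,
\]
since the inner contour integral of an entire function vanishes. Morera's theorem then gives holomorphy.

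The only genuine content is the integrability of $H^{\Re s}$ near the zeros for $\Re s$ close to $-1$, and Lemma~\ref{res:2.1} already supplies it. I expect no real obstacle beyond making sure the dominating function is uniform in $s$ over compacta, which the two-endpoint bound $H^{\sigma_0}+H^{\sigma_1}$ handles.
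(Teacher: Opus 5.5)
Your proposal is correct and follows the paper's proof essentially step for step: the same two-endpoint domination $|H^s|\le H^{\sigma_0}+H^{\sigma_1}$, integrable by Lemma~\ref{res:2.1}, then continuity by dominated convergence, Fubini on $\partial T$, and Morera's theorem. Your monotonicity argument for $t\mapsto H^t$, which justifies the domination bound, is a detail the paper leaves implicit.
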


\begin{proof}
Let $K \Subset \{\Re s > -1\}$, and put

\[
  \sigma_{0} := \min_{s \in K} \Re s , \qquad  \sigma_{1} := \max_{s \in K} \Re s .
\]
For $s \in K$ and $H > 0$,

\[
  |H^s| = H^{\Re s} \le H^{\sigma_{0}} + H^{\sigma_{1}} .
\]
The right-hand side is integrable by Lemma~\ref{res:2.1}, since $\sigma_{0} > -1$. Dominated convergence gives continuity of $c_s(m,n)$.

Let $T$ be a triangle whose closure lies in this half-plane. The same bound, with $K = \partial T$, justifies Fubini's theorem:

\begin{align*}
  \int_{\partial T} c_s(m,n) ds &= \int_{\mathbb{T}^{2}} e(-m\theta - n\psi ) \left( \int_{\partial T} H(\theta ,\psi )^s ds \right) d\theta d\psi \\
  &= 0 .
\end{align*}
The inner integral vanishes because $s \mapsto H^s$ is entire where $H > 0$, and is the constant zero function at the zeros of $H$ under our convention. Morera's theorem proves the lemma.
\end{proof}

\subsection{The triple-Bessel coefficient}\label{sub:3.2}
We next compute the Fourier coefficients of $J_{0}(r\sqrt{H})$ directly from the integral representation of $J_{0}$.

\begin{lemma}\label{res:3.2}
For every $r > 0$ and $(m,n) \in \mathbb{Z}^{2}$,

\begin{equation}\label{eq:triple}
  \int_{\mathbb{T}^{2}} J_{0}\big( r\sqrt{H(\theta ,\psi )} \big) e(-m\theta - n\psi ) d\theta d\psi = (-1)^{m+n} J_m(r) J_n(r) J_{m+n}(r) .
\end{equation}
In particular, for $q \ge 1$,

\begin{equation}\label{eq:tripleq}
  \int_{\mathbb{T}^{2}} J_{0}(r\sqrt{H}) e(Nq\theta - q\psi ) d\theta d\psi = (-1)^{Nq} J_{qN}(r) J_{q(N-1)}(r) J_q(r) .
\end{equation}
\end{lemma}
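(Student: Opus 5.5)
The plan is to write $J_0(r|P|)$ as an average of plane waves and then factor the resulting integral over the three terms of $P$. The standard integral representation gives, for $w \in \mathbb{C}$ viewed as a vector in $\mathbb{R}^2$,
\[
  J_0(r|w|) = \int_{\mathbb{T}} \exp\big( i r \,\Re(w\, e(-\phi)) \big)\, d\phi ,
\]
because $\Re(w e(-\phi)) = |w|\cos(2\pi\phi - \arg w)$ and $J_0(t) = \int_{\mathbb{T}} e^{it\cos 2\pi\phi}\,d\phi$. We apply this with $w = P(\theta,\psi) = 1 + e(\theta) + e(\psi)$, using $\sqrt{H} = |P|$. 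The real part then splits additively:
\[
  \Re(P e(-\phi)) = \cos(2\pi\phi) + \cos(2\pi(\theta-\phi)) + \cos(2\pi(\psi-\phi)).
\]
So the integrand becomes a product of three factors $e^{ir\cos(\cdot)}$.

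Next we insert this into the left side of \eqref{eq:triple} and use Fubini; everything is bounded and the domain is compact. For fixed $\phi$ we substitute $\theta = \phi + t$ and $\psi = \phi + u$, which introduces the phase $e(-(m+n)\phi)$. The $t$ and $u$ integrals then separate, and the Jacobi--Anger formula gives
\[
  \int_{\mathbb{T}} e^{ir\cos 2\pi t} e(-mt)\,dt = i^m J_m(r).
\]
The $(t,u)$ integral is therefore $i^m J_m(r)\, i^n J_n(r)$. The remaining $\phi$-integral is
\[
  \int_{\mathbb{T}} e^{ir\cos 2\pi\phi} e(-(m+n)\phi)\,d\phi = i^{m+n}J_{m+n}(r).
\]
The three constants combine to $i^{2(m+n)} = (-1)^{m+n}$, which is \eqref{eq:triple}.

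Two small checks remain. First, the sign conventions: we should use $J_{-\ell} = (-1)^\ell J_\ell$ consistently, with Jacobi--Anger valid for all integer orders. The case $m+n<0$ is then automatic, since the identity holds for every integer index. Second, for \eqref{eq:tripleq} we set $m = -Nq$ and $n = q$, so that $m+n = -q(N-1)$. This gives
\[
  (-1)^{q(N-1)} J_{-Nq}J_qJ_{-q(N-1)} = (-1)^{q(N-1)}(-1)^{Nq}(-1)^{q(N-1)} J_{qN}J_qJ_{q(N-1)} = (-1)^{Nq}J_{qN}J_{q(N-1)}J_q .
\]

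The main obstacle is only this bookkeeping of signs, and of the direction of rotation in the $\phi$-shift. The analytic steps (Fubini and the plane-wave formula) are routine.
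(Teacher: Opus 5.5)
Your proposal is correct and follows essentially the same route as the paper: both write $J_0(r|P|)$ as a circle average of plane waves, split $\Re\bigl(e^{-it}P\bigr)$ into three cosines, apply Fubini and Bessel's integral (Jacobi--Anger) formula to each factor, combine the constants into $i^{2(m+n)}=(-1)^{m+n}$, and finish with $J_{-\ell}=(-1)^\ell J_\ell$. Your shift $\theta=\phi+t$, $\psi=\phi+u$ is just the paper's computation $A_m(t)=i^mJ_m(r)e^{-imt}$ written in different variables.
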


\begin{proof}
Write $P(\theta ,\psi ) = 1 + e(\theta ) + e(\psi )$, so that $\sqrt{H} = |P|$. The integral representation of $J_{0}$ gives

\[
  J_{0}(r|P|) = \frac{1}{2\pi } \int_{0}^{2\pi } \exp \big( ir \Re (e^{-it}P) \big) dt .
\]
Since

\[
  \Re (e^{-it}P) = \cos t + \cos (2\pi \theta - t) + \cos (2\pi \psi - t) ,
\]
Fubini's theorem gives

\begin{align*}
  \int_{\mathbb{T}^{2}} J_{0}(r\sqrt{H}) e(-m\theta - n\psi ) d\theta d\psi
  &= \frac{1}{2\pi } \int_{0}^{2\pi } e^{ir\cos t} A_m(t) A_n(t) dt ,
\end{align*}
where

\[
  A_m(t) := \int_{\mathbb{T}} e^{ir\cos (2\pi \theta - t)} e(-m\theta ) d\theta .
\]
The interchange is justified because the integrand has modulus $1$ on the finite-measure product space.

For each integer $\ell$, Bessel's integral formula~\cite[Eq.~10.9.2]{r1} gives

\[
  \frac{1}{2\pi } \int_{0}^{2\pi } e^{ir\cos u} e^{-i\ell u} du = i^\ell J_\ell (r) .
\]
The change of variables $u = 2\pi \theta - t$, followed by periodicity, therefore gives $A_m(t) = i^m J_m(r) e^{-imt}$. Consequently,

\begin{align*}
  \int_{\mathbb{T}^{2}} J_{0}(r\sqrt{H}) e(-m\theta - n\psi ) d\theta d\psi
  &= i^{m+n} J_m(r) J_n(r) \frac{1}{2\pi } \int_{0}^{2\pi } e^{ir\cos t} e^{-i(m+n)t} dt \\
  &= (-1)^{m+n} J_m(r) J_n(r) J_{m+n}(r) .
\end{align*}
Taking $(m,n) = (-Nq, q)$ and using $J_{-\ell } = (-1)^\ell J_\ell$ gives~\eqref{eq:tripleq}.
\end{proof}

\subsection{Mode-by-mode analytic continuation}\label{sub:3.3}
For $q \ge 1$, set

\[
  \Psi_q(r) := J_{qN}(r) J_{q(N-1)}(r) J_q(r) ,
\]
and define

\begin{equation}\label{eq:Iqdef}
  I_q(s) := \int_{0}^{\infty } \Psi_q(r) r^{-2s-1} dr
\end{equation}
whenever the integral is absolutely convergent. We first establish a strip of absolute convergence in which the mode-by-mode continuation will be carried out.

\begin{lemma}\label{res:3.3}
For $q \ge 1$, set

\[
  C_q := \Gamma (qN+1) \Gamma (q(N-1)+1) \Gamma (q+1) .
\]
Then

\begin{equation}\label{eq:Psibound}
  |\Psi_q(r)| \le \frac{(r/2)^{2qN}}{C_q} , \qquad  r > 0 ,
\end{equation}
and, for each fixed $q$,

\[
  \Psi_q(r) = O_q(r^{-3/2}) \qquad  (r \to \infty ) .
\]
Consequently, $I_q$ is holomorphic on

\[
  \Omega_q := \left\{ s \in \mathbb{C} : -\frac{3}{4} < \Re s < qN \right\} .
\]
\end{lemma}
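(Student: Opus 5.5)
The plan has three steps: a power-series bound for each Bessel factor, the standard large-argument decay, and then holomorphy of the Mellin integral via locally uniform domination.

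\emph{Small-argument bound.} I will use the standard estimate for integer order $\nu\ge 0$ and real $r>0$,
\[
|J_\nu(r)|\le \frac{(r/2)^\nu}{\Gamma(\nu+1)}.
\]
This follows from Poisson's integral
\[
J_\nu(r)=\frac{(r/2)^\nu}{\Gamma(\nu+1/2)\Gamma(1/2)}\int_{-1}^1(1-t^2)^{\nu-1/2}\cos(rt)\,dt,
\]
together with $|\cos(rt)|\le 1$ and $\int_{-1}^1(1-t^2)^{\nu-1/2}dt=\Gamma(\nu+1/2)\Gamma(1/2)/\Gamma(\nu+1)$. Applying it to the orders $qN$, $q(N-1)$ and $q$ and multiplying, the total power is $(r/2)^{qN+q(N-1)+q}=(r/2)^{2qN}$. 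This gives \eqref{eq:Psibound}.

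\emph{Large-argument decay.} For each fixed integer order, the standard asymptotic $J_\nu(r)=\sqrt{2/(\pi r)}\cos(r-\nu\pi/2-\pi/4)+O_\nu(r^{-3/2})$ shows $|J_\nu(r)|\le C_\nu r^{-1/2}$ for $r\ge 1$. Since $|J_\nu|\le 1$, this bound holds for all $r>0$ after enlarging $C_\nu$. The product of the three factors is then $O_q(r^{-3/2})$.

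\emph{Holomorphy on $\Omega_q$.} Let $K\subset\Omega_q$ be compact, and put $\sigma_0=\min_K\Re s>-3/4$ and $\sigma_1=\max_K\Re s<qN$. For $s\in K$ I split the integral at $r=1$.
\begin{itemize}
\item On $(0,1]$: $|\Psi_q(r)r^{-2s-1}|\le C_q^{-1}2^{-2qN}r^{2qN-2\sigma_1-1}$, which is integrable because $2qN-2\sigma_1>0$.
\item On $[1,\infty)$: $|\Psi_q(r)r^{-2s-1}|\lesssim_q r^{-3/2-2\sigma_0-1}$, which is integrable because $2\sigma_0+3/2>0$, i.e. $\sigma_0>-3/4$.
\end{itemize}
This gives a single integrable majorant uniformly over $s\in K$. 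Dominated convergence then yields continuity of $I_q$ on $\Omega_q$. Fubini plus Morera over triangles, exactly as in Lemma~\ref{res:3.1}, yields holomorphy, since $s\mapsto r^{-2s-1}$ is entire for each $r>0$.

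The only step needing care is the uniform $r^{-1/2}$ bound on the Bessel functions, and this is standard (e.g.\ DLMF 10.17). The remaining steps are routine.
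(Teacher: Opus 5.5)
Your proposal is correct and follows the paper's proof essentially step by step. You use the Poisson integral to get $|J_\ell(r)|\le (r/2)^\ell/\Gamma(\ell+1)$ for each factor and the large-argument $O(r^{-1/2})$ decay, then split at $r=1$ with majorants that are uniform on compact subsets of $\Omega_q$; your dominated-convergence-plus-Morera argument simply spells out the holomorphic parameter-integral theorem that the paper invokes.
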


\begin{proof}
The three orders satisfy

\[
  qN + q(N-1) + q = 2qN .
\]
Poisson's integral representation~\cite[Eq.~10.9.4]{r1} gives, for $\ell \ge 0$,

\[
  J_\ell (r) = \frac{2(r/2)^\ell }{\sqrt{\pi }\, \Gamma (\ell + \tfrac{1}{2})} \int_{0}^{1} (1-t^{2})^{\ell - \frac{1}{2}} \cos (rt) dt .
\]
Hence

\[
  |J_\ell (r)| \le \frac{2(r/2)^\ell }{\sqrt{\pi }\, \Gamma (\ell + \tfrac{1}{2})} \int_{0}^{1} (1-t^{2})^{\ell - \frac{1}{2}} dt .
\]
Since

\[
  \int_{0}^{1} (1-t^{2})^{\ell - \frac{1}{2}} dt = \frac{\sqrt{\pi }\, \Gamma (\ell + \tfrac{1}{2})}{2\Gamma (\ell +1)} ,
\]
we obtain

\[
  |J_\ell (r)| \le \frac{(r/2)^\ell }{\Gamma (\ell +1)} , \qquad  r > 0 .
\]

Multiplying the three estimates gives~\eqref{eq:Psibound}. For each fixed $\ell$, the large-argument estimate~\cite[Eq.~10.17.3]{r1}

\[
  J_\ell (r) = O_\ell (r^{-1/2}) \qquad  (r \to \infty )
\]
gives $\Psi_q(r) = O_q(r^{-3/2})$.

Let $K \Subset \Omega_q$, and set

\[
  \sigma_{0} := \min_{s \in K} \Re s , \qquad  \sigma_{1} := \max_{s \in K} \Re s .
\]
Then $\sigma_{0} > -3/4$ and $\sigma_{1} < qN$. For $s \in K$, \eqref{eq:Psibound} gives

\[
  \left| \Psi_q(r) r^{-2s-1} \right| \le \frac{2^{-2qN}}{C_q} r^{2qN - 2\sigma_{1} - 1} , \qquad  0 < r \le 1 ,
\]
whereas the large-argument bound gives

\[
  \left| \Psi_q(r) r^{-2s-1} \right| \le C_{q,K} r^{-2\sigma_{0} - 5/2} , \qquad  r \ge 1 .
\]
Both majorants are integrable. Hence $I_q(s)$ is absolutely convergent on $\Omega_q$, locally uniformly in $s$. Since $s \mapsto r^{-2s-1}$ is entire for each $r > 0$, the standard holomorphic parameter-integral theorem shows that $I_q$ is holomorphic on $\Omega_q$.
\end{proof}

The Mellin integral representing $H(\theta ,\psi )^s$ is absolutely convergent only in the strip

\[
  -\frac{1}{4} < \Re s < 0 .
\]
After taking the $q$-th resonant Fourier coefficient, however, the triple-Bessel product vanishes to order $2qN$ at the origin. Consequently, $I_q(s)$ is holomorphic on the larger strip $\Omega_q$, which contains the positive exponents required below. This allows the identity to be continued mode by mode.

Set

\[
  C(s) := \frac{2^{2s+1}\Gamma (s+1)}{\Gamma (-s)} .
\]
The precise relation between $c_s(-Nq, q)$ and $I_q(s)$ is the following.

\begin{proposition}\label{res:3.4}
For every $q \ge 1$ and every $s \in \Omega_q$,

\begin{equation}\label{eq:cI}
  c_s(-Nq, q) = C(s) (-1)^{Nq} I_q(s) .
\end{equation}
\end{proposition}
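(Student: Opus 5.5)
Both sides of \eqref{eq:cI} are holomorphic in $s$: the left side on $\{\Re s>-1\}$ by Lemma~\ref{res:3.1}, and the right side on $\Omega_q$ by Lemma~\ref{res:3.3}, since $C(s)=2^{2s+1}\Gamma(s+1)/\Gamma(-s)$ is holomorphic for $\Re s>-1$ ($1/\Gamma(-s)$ is entire). The domain $\Omega_q$ is connected and lies inside $\{\Re s>-1\}$. By the identity theorem it therefore suffices to prove \eqref{eq:cI} on a nonempty open subset, and I would use the strip $S:=\{-\tfrac14<\Re s<0\}\subset\Omega_q$.

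\emph{Mellin representation.} For $a>0$ and $-\tfrac14<\Re s<0$ I would use the standard Weber-type integral
\[
  \int_0^\infty J_0(ar)\,r^{-2s-1}\,dr=2^{-2s-1}a^{2s}\frac{\Gamma(-s)}{\Gamma(s+1)} .
\]
This is the Mellin transform of $J_0$ at exponent $-2s$, i.e.\ \cite[10.22.43]{r1} with $\mu=-2s$. It converges absolutely at $0$ since $\Re(-2s-1)>-1$, and absolutely at $\infty$ since $J_0=O(r^{-1/2})$ and $\Re(-2s-1)-\tfrac12<-1$ exactly when $\Re s>-\tfrac14$. Taking $a=\sqrt{H(\theta,\psi)}$ gives, wherever $H>0$,
\[
  H^s=C(s)\int_0^\infty J_0\big(r\sqrt H\big)\,r^{-2s-1}\,dr .
\]
I would multiply this by $e(Nq\theta-q\psi)$ and integrate over $\mathbb{T}^2$.

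\emph{Interchange of integrals.} If we may swap the $\mathbb{T}^2$ and $r$ integrals, Lemma~\ref{res:3.2}, equation \eqref{eq:tripleq}, turns the inner torus integral into $(-1)^{Nq}\Psi_q(r)$, and we obtain $c_s(-Nq,q)=C(s)(-1)^{Nq}I_q(s)$ on $S$. This interchange is the main obstacle, because the integrand is not absolutely integrable on $\mathbb{T}^2\times(0,\infty)$. Indeed $J_0(r\sqrt H)$ decays only like $(r\sqrt H)^{-1/2}$, and near the zeros of $H$ it does not decay in $r$ at all. I would handle this by truncating: replace $\int_0^\infty$ by $\int_0^R$.

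\emph{Truncation and limit.} For finite $R$ the interchange is legitimate. The integrand is bounded by $r^{-2\Re s-1}$, which is integrable on $(0,R)\times\mathbb{T}^2$ because $\Re s<0$. This gives
\[
  \int_{\mathbb{T}^2}e(Nq\theta-q\psi)\int_0^R J_0(r\sqrt H)\,r^{-2s-1}\,dr
  =(-1)^{Nq}\int_0^R\Psi_q(r)\,r^{-2s-1}\,dr .
\]
As $R\to\infty$ the right side tends to $(-1)^{Nq}I_q(s)$ by Lemma~\ref{res:3.3}. For the left side I would apply dominated convergence in $(\theta,\psi)$. The substitution $t=r\sqrt H$ gives the uniform bound
\[
  \Big|\int_0^R J_0(r\sqrt H)\,r^{-2s-1}\,dr\Big|\le M_s\,H^{\Re s},
\]
where $M_s:=\sup_{T>0}\big|\int_0^T J_0(t)\,t^{-2s-1}\,dt\big|$. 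This supremum is finite because the improper integral converges. The bound $H^{\Re s}$ is integrable by Lemma~\ref{res:2.1}, since $\Re s>-1$. Pointwise on $\{H>0\}$, which has full measure, the truncated integrals converge to $H^s/C(s)$. Hence the left side converges to $c_s(-Nq,q)/C(s)$, which proves \eqref{eq:cI} on $S$. Analytic continuation then extends it to all of $\Omega_q$.
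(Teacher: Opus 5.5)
Your proof is correct and has the same skeleton as the paper's. Both use the anchor strip $-\tfrac14<\Re s<0$, the Mellin formula for $J_0$ with $a=\sqrt H$, an interchange of the $r$- and torus integrals, Lemma~\ref{res:3.2}, and then the identity theorem on the connected set $\Omega_q$.

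The only divergence is the interchange, and there the obstacle you describe does not exist. In the anchor strip the integrand \emph{is} absolutely integrable on $\mathbb{T}^2\times(0,\infty)$. With $\sigma=\Re s$, Tonelli and the same substitution $t=r\sqrt H$ you use later give
\[
  \int_{\mathbb{T}^2}\int_0^\infty \bigl|J_0(r\sqrt H)\bigr|\,r^{-2\sigma-1}\,dr\,d\theta\,d\psi
  = M(\sigma)\int_{\mathbb{T}^2}H^\sigma\,d\theta\,d\psi ,
  \qquad M(\sigma):=\int_0^\infty |J_0(t)|\,t^{-2\sigma-1}\,dt .
\]
Here $M(\sigma)<\infty$ is exactly the absolute convergence you already checked in your Mellin step, and $\int H^\sigma<\infty$ by Lemma~\ref{res:2.1}. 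The lack of decay in $r$ near the zeros of $H$ is precisely what produces the factor $H^\sigma$. That factor blows up there but remains integrable, since $\sigma>-1$. The paper argues exactly this way and applies Fubini once.

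Your truncation argument is nonetheless sound. The finite-$R$ Fubini step, the dominating function $M_sH^{\Re s}$, and the limit of the right side via Lemma~\ref{res:3.3} all check out, so the conclusion stands; the argument is simply longer than necessary. What it does buy is that it uses only convergence, not absolute convergence, of $\int_0^\infty J_0(t)\,t^{-2s-1}\,dt$. It would therefore run unchanged on the wider strip $-\tfrac34<\Re s<0$. Since the identity theorem needs only one nonempty open set, that extra reach is not needed here.
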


\begin{proof}
We begin in the strip

\[
  -\frac{1}{4} < \Re s < 0 .
\]
There the Mellin formula for $J_{0}$~\cite[Eq.~10.22.43]{r1} gives

\begin{equation}\label{eq:mellin}
  a^{2s} = C(s) \int_{0}^{\infty } J_{0}(ar) r^{-2s-1} dr , \qquad  a > 0 .
\end{equation}
Taking $a = \sqrt{H(\theta ,\psi )}$, we obtain

\[
  H(\theta ,\psi )^s = C(s) \int_{0}^{\infty } J_{0}\big( r\sqrt{H(\theta ,\psi )} \big) r^{-2s-1} dr
\]
whenever $H(\theta ,\psi ) > 0$, and hence almost everywhere on $\mathbb{T}^{2}$.

It remains to justify the change in the order of integration. Put $\sigma = \Re s$ and

\[
  M(\sigma ) := \int_{0}^{\infty } |J_{0}(t)| t^{-2\sigma -1} dt .
\]
The estimates $J_{0}(t) = O(1)$ as $t \to 0$ and $J_{0}(t) = O(t^{-1/2})$ as $t \to \infty$ show that $M(\sigma ) < \infty$ when $-1/4 < \sigma < 0$. Let

\[
  Z := \{ (\theta ,\psi ) \in \mathbb{T}^{2} : H(\theta ,\psi ) = 0 \} .
\]
For $(\theta ,\psi ) \notin Z$, the change of variables $t = r\sqrt{H}$ gives

\[
  \int_{0}^{\infty } |J_{0}(r\sqrt{H})| r^{-2\sigma -1} dr = H^\sigma M(\sigma ) .
\]
Consequently, Lemma~\ref{res:2.1} gives the absolute bound

\begin{align*}
  \int_{\mathbb{T}^{2}\setminus Z} \int_{0}^{\infty } |J_{0}(r\sqrt{H})| r^{-2\sigma -1} dr d\theta d\psi
  = M(\sigma ) \int_{\mathbb{T}^{2}} H^\sigma d\theta d\psi < \infty .
\end{align*}
Thus Fubini's theorem applies. Since $Z$ has measure zero, it may be restored in the inner integral after the interchange, and we obtain

\begin{align*}
  c_s(-Nq, q) &= C(s) \int_{0}^{\infty } r^{-2s-1} \left( \int_{\mathbb{T}^{2}} J_{0}(r\sqrt{H}) e(Nq\theta - q\psi ) d\theta d\psi \right) dr \\
  &= C(s) (-1)^{Nq} I_q(s) ,
\end{align*}
where the last equality follows from Lemma~\ref{res:3.2}. Hence~\eqref{eq:cI} holds in the anchor strip $-1/4 < \Re s < 0$.

By Lemma~\ref{res:3.1}, $c_s(-Nq, q)$ is holomorphic on $\Omega_q$, while Lemma~\ref{res:3.3} gives the same for $I_q(s)$. Moreover,

\[
  C(s) = \frac{2^{2s+1}\Gamma (s+1)}{\Gamma (-s)}
\]
is holomorphic there: $1/\Gamma (-s)$ is entire, and $\Gamma (s+1)$ has no pole in $\Re s > -3/4$. Since $\Omega_q$ is connected and contains the anchor strip, the identity theorem extends~\eqref{eq:cI} to all of $\Omega_q$.
\end{proof}

\begin{theorem}[Bessel representation]\label{res:3.5}
Let

\[
  k \ge 4, \qquad  s = k + \alpha , \qquad  0 < \alpha < 1 .
\]
Then

\begin{equation}\label{eq:Drep}
  D(s) = \frac{2^{2s+3}\Gamma (s+1)^{2}\sin (\pi \alpha )}{\pi } \sum_{\substack{q \ge 1 \\ q \,\mathrm{odd}}} I_q(s) ,
\end{equation}
and the series converges absolutely.
\end{theorem}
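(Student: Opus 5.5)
We combine Proposition~\ref{res:2.3} with Proposition~\ref{res:3.4} and then simplify the constant using the reflection formula. Since $s = k+\alpha \ge 4$ is real, Proposition~\ref{res:2.3} applies and gives $D(s) = -4\sum_{q\,\mathrm{odd}} c_s(-Nq,q)$. Every $q \ge 1$ satisfies $qN \ge N = k+2 > s$, so $s \in \Omega_q$ for every $q$. Proposition~\ref{res:3.4} therefore applies to every mode and gives $c_s(-Nq,q) = C(s)(-1)^{Nq} I_q(s)$.

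The sign $(-1)^{Nq}$ has to be handled. For odd $q$ we have $(-1)^{Nq} = (-1)^N$, which a priori depends on the parity of $N$. I would double-check this against the normalization of Lemma~\ref{res:3.2}. That lemma gives coefficient $(-1)^{m+n}J_mJ_nJ_{m+n}$ with $m=-Nq$ and $n=q$. Then $J_{-Nq} = (-1)^{Nq}J_{Nq}$ and $J_{m+n} = J_{-(N-1)q} = (-1)^{(N-1)q}J_{(N-1)q}$, while $(-1)^{m+n} = (-1)^{(N-1)q}$. The total sign is $(-1)^{Nq}\cdot(-1)^{2(N-1)q} = (-1)^{Nq}$, as stated. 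So the sign is $(-1)^N$ uniformly over odd $q$, and it must be absorbed by the constant.

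Next I simplify $C(s)$ via reflection. We have $1/\Gamma(-s) = -\Gamma(s+1)\sin(\pi s)/\pi$, so
\[
C(s) = -\frac{2^{2s+1}\Gamma(s+1)^2\sin(\pi s)}{\pi}.
\]
Since $\sin(\pi s) = \sin(\pi k + \pi\alpha) = (-1)^k\sin(\pi\alpha)$, this gives
\[
-4\,C(s)(-1)^{N} = \frac{2^{2s+3}\Gamma(s+1)^2\sin(\pi\alpha)}{\pi}\,(-1)^{k+N}.
\]
Here $(-1)^{k+N} = (-1)^{2k+2} = 1$. This is exactly the prefactor in~\eqref{eq:Drep}, so the two sign factors cancel precisely because $N = k+2$. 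Summing over odd $q$ yields~\eqref{eq:Drep}.

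Finally, absolute convergence of $\sum_{q\,\mathrm{odd}} I_q(s)$ follows from the identity just proved. Each $I_q(s)$ equals a fixed nonzero constant times $c_s(-Nq,q)$; the constant is nonzero since $\sin(\pi\alpha)\neq 0$ for $0<\alpha<1$. Lemma~\ref{res:2.2} gives $\sum |c_s(m,n)| < \infty$ for $s \ge 1$, and hence in particular over the subfamily $(-Nq,q)$. The main point requiring care is the sign bookkeeping above, namely checking that $(-1)^{Nq}$ for odd $q$ and $(-1)^k$ from $\sin(\pi s)$ cancel. Everything else is a direct assembly of the earlier results.
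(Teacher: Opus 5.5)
Your proposal is correct and follows the paper's proof essentially step for step. You combine Proposition~\ref{res:2.3} with Proposition~\ref{res:3.4}, using $s<N\le qN$ to place $s$ in every $\Omega_q$. You then simplify $C(s)$ by the reflection formula, so that $(-1)^N$ and $(-1)^k$ cancel because $N=k+2$, and you obtain absolute convergence from $C(s)\neq 0$ together with Lemma~\ref{res:2.2}; your independent recheck of the sign $(-1)^{Nq}$ agrees with \eqref{eq:tripleq}.
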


\begin{proof}
Since

\[
  s < k+1 = N-1 < N \le qN ,
\]
we have $s \in \Omega_q$ for every $q \ge 1$, so Proposition~\ref{res:3.4} applies to every resonant mode. Moreover, if $q$ is odd, then

\[
  (-1)^{Nq} = (-1)^{N} = (-1)^{k} .
\]
Proposition~\ref{res:2.3} therefore gives

\[
  D(s) = -4C(s)(-1)^{k} \sum_{\substack{q \ge 1 \\ q \,\mathrm{odd}}} I_q(s) .
\]

By the reflection formula,

\[
  \frac{1}{\Gamma (-s)} = -\frac{\Gamma (s+1)\sin (\pi s)}{\pi } ,
\]
and hence

\[
  C(s) = -\frac{2^{2s+1}\Gamma (s+1)^{2}\sin (\pi s)}{\pi } .
\]
Since $\sin (\pi s) = \sin \big( \pi (k+\alpha ) \big) = (-1)^{k}\sin (\pi \alpha )$, we obtain~\eqref{eq:Drep}.

Finally, $0 < \alpha < 1$ implies $C(s) \ne 0$. Using~\eqref{eq:cI} and the absolute summability of the Fourier coefficients from Lemma~\ref{res:2.2}, we have

\[
  \sum_{\substack{q \ge 1 \\ q \,\mathrm{odd}}} |I_q(s)| = \frac{1}{|C(s)|} \sum_{\substack{q \ge 1 \\ q \,\mathrm{odd}}} |c_s(-Nq, q)| \le \frac{1}{|C(s)|} \sum_{m,n \in \mathbb{Z}} |c_s(m,n)| < \infty . \qedhere
\]
\end{proof}

The prefactor in~\eqref{eq:Drep} is positive. Thus the proof of Theorem~\ref{res:1.1} is reduced to showing that

\begin{equation}\label{eq:target}
  \sum_{\substack{q \ge 1 \\ q \,\mathrm{odd}}} I_q(s) > 0 .
\end{equation}

\section{The leading mode}\label{sec:leading}

Throughout this section,

\[
  k \ge 4, \qquad  N = k+2, \qquad  0 \le \alpha \le 1, \qquad  s = k + \alpha .
\]
We consider the leading term

\[
  I_{1}(s) = \int_{0}^{\infty } J_N(r) J_{N-1}(r) J_{1}(r)\, r^{-2s-1} dr .
\]
Set

\[
  L := 2N - 1 = 2k+3 .
\]

The adjacent orders $N$ and $N-1$ allow us to use Neumann's product formula with the nonnegative weight $\cos \vartheta$. This reduces the positivity question to an integral of two Bessel functions, which we evaluate below.

\begin{proposition}\label{res:4.1}
For $0 \le \alpha \le 1$,

\begin{equation}\label{eq:neumann}
  I_{1}(s) = \frac{2}{\pi } \int_{0}^{\pi /2} \cos \vartheta \, W(2\cos \vartheta ) d\vartheta ,
\end{equation}
where

\[
  W(A) := \int_{0}^{\infty } J_L(Ar) J_{1}(r)\, r^{-2s-1} dr , \qquad  0 \le A \le 2 .
\]
\end{proposition}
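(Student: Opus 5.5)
The plan is to apply Neumann's product formula to the two adjacent large orders and then exchange the order of integration. For $\Re(\mu+\nu) > -1$, Neumann's formula~\cite[Eq.~10.9.26]{r1} reads
\[
  J_\mu(z) J_\nu(z) = \frac{2}{\pi} \int_0^{\pi/2} J_{\mu+\nu}(2z\cos\vartheta) \cos\big((\mu-\nu)\vartheta\big) \, d\vartheta .
\]
I will take $\mu = N$ and $\nu = N-1$, so that $\mu + \nu = 2N-1 = L$ and $\mu - \nu = 1$. For every $r > 0$ this gives
\[
  J_N(r) J_{N-1}(r) = \frac{2}{\pi} \int_0^{\pi/2} J_L(2r\cos\vartheta) \cos\vartheta \, d\vartheta .
\]
First I record that $I_1(s)$ is well defined for all $0 \le \alpha \le 1$. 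Here $s = k+\alpha \le k+1 < N$, so $s \in \Omega_1$ and Lemma~\ref{res:3.3} gives absolute convergence, including at the endpoints $\alpha = 0, 1$. Next I multiply the product formula by $J_1(r) r^{-2s-1}$, integrate over $r \in (0,\infty)$, and interchange the $r$- and $\vartheta$-integrals. The inner $r$-integral is then exactly $W(2\cos\vartheta)$, which yields~\eqref{eq:neumann}.

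The only real work is justifying Fubini, so I will bound the double integral
\[
  \int_0^{\pi/2} \int_0^\infty \big|J_L(2r\cos\vartheta)\big| \, |J_1(r)| \, r^{-2s-1} \cos\vartheta \, dr \, d\vartheta
\]
by a majorant that is uniform in $\vartheta$. For $0 < r \le 1$ I use the bound $|J_\ell(x)| \le (x/2)^\ell/\Gamma(\ell+1)$ from the proof of Lemma~\ref{res:3.3}. It gives
\[
  |J_L(2r\cos\vartheta)| \le \frac{r^L}{L!} \quad\text{and}\quad |J_1(r)| \le \frac{r}{2},
\]
so the integrand is $O(r^{L+1-2s-1}) = O(r^{L-2s})$. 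Since $L - 2s = 2k+3-2k-2\alpha = 3-2\alpha \ge 1$, this is integrable near $0$. For $r \ge 1$ I use $|J_L| \le 1$ and $|J_1(r)| \le C r^{-1/2}$, which gives the majorant $C r^{-2s-3/2}$. This is integrable at infinity because $s \ge 4$. Both majorants are independent of $\vartheta$, and the $\vartheta$-range is finite, so Tonelli and then Fubini apply.

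The same estimates show that $W(A)$ is absolutely convergent for every $0 \le A \le 2$: the small-$r$ bound carries an extra factor $(A/2)^L \le 1$, and $W(0) = 0$. Consequently the right-hand side of~\eqref{eq:neumann} is meaningful. I do not expect a genuine obstacle here. The one point to watch is the endpoint $\alpha = 1$, where the small-$r$ exponent is smallest. Even there $L - 2s = 1 > -1$, so the margin remains comfortable, and the whole argument is routine bookkeeping around Neumann's formula.
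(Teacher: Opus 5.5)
Your proposal is correct and follows the paper's proof: Neumann's formula with $\mu=N$, $\nu=N-1$, followed by Fubini justified by the same $\vartheta$-uniform small-$r$ majorant of order $r^{L-2s}=r^{3-2\alpha}$. The only cosmetic difference is at infinity, where you use the $r^{-1/2}$ decay of $J_1$; the paper simply uses $|J_L|,|J_1|\le 1$, which gives the majorant $r^{-2s-1}$ and already suffices.
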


\begin{proof}
Neumann's product formula~\cite[Eq.~10.9.26]{r1} (see also~\cite[Eq.~(5.43.1), p.~150]{r8}) gives

\[
  J_N(r) J_{N-1}(r) = \frac{2}{\pi } \int_{0}^{\pi /2} J_L(2r\cos \vartheta ) \cos \vartheta \, d\vartheta .
\]
Its order condition is automatic here, since $N + (N-1) > -1$.

We verify absolute integrability before changing the order of integration. For $0 < r \le 1$, uniformly in $0 \le \vartheta \le \pi /2$,

\[
  |J_L(2r\cos \vartheta )| \le \frac{r^{L}}{\Gamma (L+1)} , \qquad  |J_{1}(r)| \le \frac{r}{2} .
\]
Hence

\[
  \left| J_L(2r\cos \vartheta ) J_{1}(r) \cos \vartheta \, r^{-2s-1} \right| \le \frac{1}{2\Gamma (L+1)} r^{L-2s} = \frac{1}{2\Gamma (L+1)} r^{3-2\alpha } ,
\]
which is integrable at $0$ for $0 \le \alpha \le 1$. For $r \ge 1$, the bound~\cite[Eq.~10.14.1]{r1}

\[
  |J_n(x)| \le 1 , \qquad  x \in \mathbb{R}, \quad  n = 0,1,2,\dots ,
\]
gives

\[
  \left| J_L(2r\cos \vartheta ) J_{1}(r) \cos \vartheta \, r^{-2s-1} \right| \le r^{-2s-1} ,
\]
which is integrable at infinity. Thus the corresponding integral over $(0,\infty ) \times (0,\pi /2)$ is absolutely convergent, and Fubini's theorem yields~\eqref{eq:neumann}.
\end{proof}

To evaluate $W(A)$, we use the following form of the Weber--Schafheitlin formula~\cite[Eq.~10.22.56]{r1} (see also~\cite[Section~13.4]{r8}). If

\[
  0 < a < b , \qquad  \Re (\mu + \nu + 1) > \Re \lambda > -1 ,
\]
then

\begin{align}
  \int_{0}^{\infty } J_\mu (ar) J_\nu (br) r^{-\lambda } dr
  &= \frac{a^\mu \Gamma \left( \frac{\mu +\nu -\lambda +1}{2} \right)}{2^\lambda b^{\mu -\lambda +1} \Gamma \left( \frac{\nu -\mu +\lambda +1}{2} \right) \Gamma (\mu +1)} \notag \\
  &\qquad \times {}_{2}F_{1}\left( \frac{\mu +\nu -\lambda +1}{2}, \frac{\mu -\nu -\lambda +1}{2}; \mu +1; \frac{a^{2}}{b^{2}} \right) . \label{eq:ws}
\end{align}
Here

\[
  {}_{2}F_{1}(a,b;c;z) = \sum_{j=0}^{\infty } \frac{(a)_j (b)_j}{(c)_j\, j!} z^j , \qquad  |z| < 1 ,
\]
where

\[
  (a)_{0} = 1 , \qquad  (a)_j = a(a+1)\cdots (a+j-1) , \qquad  j \ge 1 .
\]
Formula~\eqref{eq:ws} is~\cite[Eq.~10.22.56]{r1} written in terms of the usual Gauss hypergeometric function. Indeed, the DLMF uses Olver's normalized function

\[
  \mathbf{F}(a,b;c;z) = \frac{{}_{2}F_{1}(a,b;c;z)}{\Gamma (c)}
\]
(cf.~\cite[Eq.~15.1.2]{r1}), which accounts for the factor $\Gamma (\mu +1)^{-1}$ in~\eqref{eq:ws}.

\begin{proposition}\label{res:4.2}
Let $0 \le \alpha \le 1$ and $s = k + \alpha$. If $0 < A < 1$, then

\begin{equation}\label{eq:Wsmall}
  W(A) = \frac{A^{L}\Gamma (2-\alpha )}{2^{2s+1}\Gamma (\alpha )\Gamma (L+1)}\, {}_{2}F_{1}\left( 2-\alpha , 1-\alpha ; L+1; A^{2} \right) ,
\end{equation}
where the value at $\alpha = 0$ is understood by continuity.

If $1 < A \le 2$, then

\begin{equation}\label{eq:Wbig}
  W(A) = \frac{\Gamma (2-\alpha )A^{2s-1}}{2^{2s+1}\Gamma (2k+2+\alpha )} (1 - A^{-2})^{2s+1}\, {}_{2}F_{1}\left( \alpha , 2k+3+\alpha ; 2; A^{-2} \right) .
\end{equation}
\end{proposition}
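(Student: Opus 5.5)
Both formulas come from the Weber--Schafheitlin formula~\eqref{eq:ws} with $\lambda = 2s+1$, each orientation applied to one range of $A$. The parameter condition $\Re(\mu+\nu+1) > \lambda > -1$ holds in both cases. Here $\mu+\nu+1 = L+2 = 2k+5$, while $\lambda = 2k+2\alpha+1 \le 2k+3 < 2k+5$. The plan is therefore a direct substitution followed by simplification of the Gamma factors and the hypergeometric parameters.

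\textbf{Case $0 < A < 1$.} Substitute $r \mapsto r$ directly with $\mu = L$, $a = A$, $\nu = 1$, $b = 1$, so that $a < b$. The hypergeometric parameters are
\[
  \tfrac{\mu+\nu-\lambda+1}{2} = \tfrac{L+1-2s}{2} = 2-\alpha , \qquad \tfrac{\mu-\nu-\lambda+1}{2} = \tfrac{L-1-2s}{2} = 1-\alpha ,
\]
with $c = L+1$ and argument $A^2$. The Gamma factor in the denominator is $\Gamma\big(\tfrac{\nu-\mu+\lambda+1}{2}\big) = \Gamma\big(\tfrac{2-L+2s+1}{2}\big) = \Gamma(\alpha)$. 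The power of two is $2^{-(2s+1)}$, and $b^{\mu-\lambda+1} = 1$. This reproduces~\eqref{eq:Wsmall} exactly.

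At $\alpha = 0$ the factor $1/\Gamma(\alpha)$ vanishes. We check that $W$ is continuous in $\alpha$ on $[0,1]$ by dominated convergence, using the integrability bounds from the proof of Proposition~\ref{res:4.1}. This justifies the continuity convention. At $\alpha = 0$ we have $\lambda = 2k+1 > -1$, so~\eqref{eq:ws} still applies, and the value there is $0$.

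\textbf{Case $1 < A \le 2$.} Rescale by $r = t/A$ to put the larger argument in the second slot:
\[
  W(A) = A^{2s}\int_0^\infty J_1(A^{-1}t)\,J_L(t)\,t^{-2s-1}\,dt .
\]
Now apply~\eqref{eq:ws} with $\mu = 1$, $\nu = L$, $a = A^{-1}$, $b = 1$. The parameters become:
\begin{itemize}
  \item first numerator parameter $\tfrac{L+2-2s-1}{2} = 2-\alpha$ after simplification;
  \item second numerator parameter $\tfrac{1-L-2s}{2} = -2k-1-\alpha$;
  \item $c = 2$;
  \item denominator Gamma $\Gamma\big(\tfrac{L-1+2s+2}{2}\big) = \Gamma(2k+2+\alpha)$;
  \item prefactor $a^{\mu}/\Gamma(2) = A^{-1}$.
\end{itemize}
Together with $A^{2s}$ this gives the power $A^{2s-1}$, and the prefactor $\Gamma(2-\alpha)A^{2s-1}/\big(2^{2s+1}\Gamma(2k+2+\alpha)\big)$ matches~\eqref{eq:Wbig}.

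The remaining step is Euler's transformation
\[
  {}_2F_1(a,b;c;z) = (1-z)^{c-a-b}\,{}_2F_1(c-a,\,c-b;\,c;\,z) .
\]
Here $c-a-b = 2-(2-\alpha)+(2k+1+\alpha) = 2k+1+2\alpha = 2s+1$. The new parameters are $c-a = \alpha$ and $c-b = 2k+3+\alpha$, with $z = A^{-2}$. This yields $(1-A^{-2})^{2s+1}\,{}_2F_1(\alpha,2k+3+\alpha;2;A^{-2})$, as claimed.

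At $A = 2$ the argument is $1/4 < 1$, so the series converges and the formula is valid for all $1 < A \le 2$. The endpoint $\alpha = 0$ needs no limit here, since all the Gamma factors are finite.

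\textbf{Main obstacle.} The content is routine bookkeeping, so the real risk lies in matching the paper's normalization of~\eqref{eq:ws} with the correct roles of $a$ and $b$. I would cross-check one instance numerically, or via the small-$A$ leading term $W(A) \sim A^L\int J_L$-asymptotics. The other point is keeping the Euler exponent $2s+1$ straight.
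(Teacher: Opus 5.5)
Your proposal is correct and follows the paper's proof. It uses the same two applications of the Weber--Schafheitlin formula with $\lambda = 2s+1$, the same continuity argument at $\alpha = 0$, and the same Euler transformation with $c-a-b = 2s+1$; the only cosmetic difference is that for $1 < A \le 2$ you first rescale $r = t/A$ to take $(a,b) = (A^{-1},1)$, while the paper applies the formula directly with $(a,b,\mu,\nu) = (1,A,1,L)$, and the factor $A^{2s-1}$ comes out the same either way.
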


\begin{proof}
In both applications of~\eqref{eq:ws}, the orders are $L = 2k+3$ and $1$, and $\lambda = 2s+1$. Thus

\[
  \mu + \nu + 1 - \lambda = 4 - 2\alpha > 0 , \qquad  \lambda > -1 ,
\]
so the order conditions hold for $0 \le \alpha \le 1$.

For $0 < A < 1$, take $(a,b,\mu ,\nu ,\lambda ) = (A, 1, L, 1, 2s+1)$. The two upper hypergeometric parameters are $2-\alpha$ and $1-\alpha$; the lower parameter is $L+1$, and the other Gamma factor in the denominator is $\Gamma (\alpha )$. This gives~\eqref{eq:Wsmall} for $0 < \alpha \le 1$. At $\alpha = 0$, the defining integral for $W(A)$ is continuous in $\alpha$: its integrand is bounded uniformly by $Cr$ on $0 < r \le 1$ and by $r^{-2k-1}$ on $r \ge 1$. For fixed $A < 1$, the hypergeometric factor remains finite as $\alpha \to 0^{+}$, while $1/\Gamma (\alpha ) \to 0$. Hence $W(A) = 0$ at $\alpha = 0$.

For $1 < A \le 2$, interchange the two Bessel factors and take $(a,b,\mu ,\nu ,\lambda ) = (1, A, 1, L, 2s+1)$. Formula~\eqref{eq:ws} gives

\[
  W(A) = \frac{\Gamma (2-\alpha )A^{2s-1}}{2^{2s+1}\Gamma (2k+2+\alpha )}\, {}_{2}F_{1}\left( 2-\alpha , -2k-1-\alpha ; 2; A^{-2} \right) .
\]
Apply Euler's transformation~\cite[Eq.~15.8.1]{r1}

\[
  {}_{2}F_{1}(a,b;c;x) = (1-x)^{c-a-b}\, {}_{2}F_{1}(c-a, c-b; c; x) , \qquad  0 < x < 1 ,
\]
with $a = 2-\alpha$, $b = -2k-1-\alpha$ and $c = 2$. Since

\[
  c - a = \alpha , \qquad  c - b = 2k+3+\alpha , \qquad  c - a - b = 2s+1 ,
\]
we obtain~\eqref{eq:Wbig}.
\end{proof}

The value $A = 1$ does not affect~\eqref{eq:neumann}, since $A = 1$ corresponds to the single point $\vartheta = \pi /3$. We use~\eqref{eq:Wsmall} to show that the contribution from $0 < A < 1$ is nonnegative, and~\eqref{eq:Wbig} to obtain a quantitative lower bound from the region $1 < A < 2$.

\begin{proposition}\label{res:4.3}
For every $0 \le \alpha \le 1$,

\begin{equation}\label{eq:Wlowbound}
  I_{1}(s) \ge W_{\mathrm{low}}(k,\alpha ) > 0 ,
\end{equation}
where

\begin{equation}\label{eq:Wlowdef}
  W_{\mathrm{low}}(k,\alpha ) := \frac{\Gamma (2-\alpha )}{\pi \Gamma (2k+2+\alpha )(2s+2)} \cdot \frac{2}{5\sqrt{3}} \cdot \frac{9}{8} \left( \frac{9}{16} \right)^{s} .
\end{equation}
\end{proposition}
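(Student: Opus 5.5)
The plan is to start from the representation of Proposition~\ref{res:4.1} and split the $\vartheta$-integral at $\vartheta=\pi/3$, where $A=2\cos\vartheta$ crosses $1$. On $(\pi/3,\pi/2)$, where $0<A<1$, we use~\eqref{eq:Wsmall}. The prefactor $\Gamma(2-\alpha)/\Gamma(\alpha)$ is nonnegative (it is $0$ at $\alpha=0$, by the continuity remark in Proposition~\ref{res:4.2}). The series ${}_2F_1(2-\alpha,1-\alpha;L+1;A^2)$ has all parameters nonnegative and $0<A^2<1$, so every term of its series is $\ge0$. Hence $W(A)\ge 0$ there. Since $\cos\vartheta\ge0$, this whole piece contributes nonnegatively to $I_1(s)$ and can be discarded.

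On $(0,\pi/3)$, where $1<A<2$, we use~\eqref{eq:Wbig}. Euler's transformation has already produced ${}_2F_1(\alpha,2k+3+\alpha;2;A^{-2})$, all of whose Taylor coefficients are nonnegative and whose constant term is $1$. So we drop everything but the leading term and obtain
\[
W(A)\ \ge\ \frac{\Gamma(2-\alpha)}{2^{2s+1}\Gamma(2k+2+\alpha)}\,(A^2-1)^{2s+1}A^{-2s-3}.
\]
Next we change variables from $\vartheta$ to $A=2\cos\vartheta$, for which $\cos\vartheta\,d\vartheta = A\,dA/(2\sqrt{4-A^2})$. This gives
\[
I_1(s)\ \ge\ \frac{\Gamma(2-\alpha)}{\pi\,2^{2s+1}\Gamma(2k+2+\alpha)}\int_1^2 \frac{(A^2-1)^{2s+1}A^{-2s-2}}{\sqrt{4-A^2}}\,dA .
\]
The shape of $W_{\mathrm{low}}$ indicates how to finish. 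The factor $1/(2s+2)$ together with $(9/16)^s=3^{2s}/4^{2s}$ should come from the exact antiderivative $\int_1^2 (A^2-1)^{2s+1}\,2A\,dA = 3^{2s+2}/(2s+2)$. So the remaining factors $A^{-2s-3}$ and $1/\sqrt{4-A^2}$ must be bounded below by explicit constants, essentially by their values near $A=2$, where the mass of $(A^2-1)^{2s+1}$ concentrates for large $s$.

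The main obstacle is precisely this last step, getting the constant $\tfrac{2}{5\sqrt3}\cdot\tfrac98$ rather than a weaker one. Crude bounds ($A^{-2s-3}\ge 2^{-2s-3}$ and $1/\sqrt{4-A^2}\ge 1/2$ on all of $[1,2]$) give the right $(9/16)^s/(2s+2)$ structure but a constant of about $9/128$, too small by a factor of roughly $3.7$. I would therefore not bound $1/\sqrt{4-A^2}$ by its minimum. Instead I would exploit its blow-up at $A=2$, and the fact that $(A/2)^{-2s-3}\ge 1$ costs nothing, by writing
\[
\frac{(A^2-1)^{2s+1}}{A^{2s+2}}=\Bigl(\frac{A^2-1}{A}\Bigr)^{2s+1}\frac1A .
\]
The quantity $(A^2-1)/A$ is increasing, with value $3/2$ at $A=2$, which already produces $(3/2)^{2s}=(9/4)^s$ against the $2^{-2s}$ prefactor, i.e.\ the $(9/16)^s$ factor. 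The plan is then to restrict to a subinterval such as $[\sqrt3,2]$ (note $\sqrt{4-A^2}\le 1$ there, consistent with a $\sqrt3$ in the constant). On that subinterval I would bound $1/\sqrt{4-A^2}$ and $1/A$ from below by explicit constants, and use a convexity/tangent-line lower bound for $(A^2-1)/A$ near $A=2$ to integrate in closed form, yielding a $1/(2s+2)$ factor. I would tune the subinterval and the tangent bound until the product of constants is at least $\tfrac{2}{5\sqrt3}\cdot\tfrac98$, checking it uniformly in $k\ge4$ and $0\le\alpha\le1$. Positivity of $W_{\mathrm{low}}$ is immediate, since every factor is positive for $0\le\alpha\le1$.
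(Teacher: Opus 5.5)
Your reduction matches the paper's step for step. You discard $0<A<1$ via the nonnegative series in~\eqref{eq:Wsmall}, keep only the constant term of ${}_2F_1(\alpha,2k+3+\alpha;2;A^{-2})$, use the Jacobian $\cos\vartheta\,d\vartheta=A\,dA/(2\sqrt{4-A^2})$, and rewrite the integrand as $t^{2s+1}/(A\sqrt{4-A^2})$ with $t=(A^2-1)/A$. This is exactly what the paper does.

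The gap is in the step you flag as the crux. The function $t(A)=A-A^{-1}$ is concave, since $t''(A)=-2A^{-3}<0$. So its tangent line at $A=2$, namely $\frac32+\frac54(A-2)$, lies \emph{above} $t$: at $A=1$ it equals $\frac14$ while $t(1)=0$. It bounds $t^{2s+1}$ from above, so the ``convexity/tangent-line'' lower bound, and the tuning built on it, fails as written.

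The repair is to use a line lying below $t$: either the chord, or the line through $(2,\frac32)$ with the left-endpoint slope. On $[\sqrt3,2]$ we have $t'(x)=1+x^{-2}\le\frac43$, so $t(A)\ge\frac32-\frac43(2-A)>0$. Together with $1/A\ge\frac12$ and $1/\sqrt{4-A^2}\ge 1$, this gives
\[
\int_1^2\frac{t^{2s+1}\,dA}{A\sqrt{4-A^2}}\ \ge\ \frac38\bigl(1-\rho^{2s+2}\bigr)\frac{(3/2)^{2s+2}}{2s+2},\qquad \rho=1-\frac89\bigl(2-\sqrt3\bigr)\approx 0.762 .
\]
For $s\ge 4$ the constant is at least $\frac38(1-\rho^{10})\approx 0.350$, which exceeds the required $\frac{2}{5\sqrt3}\approx 0.231$. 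With this fix your plan closes.

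The paper needs no subinterval and no linearization. It substitutes $t=A-1/A$ exactly, so that $dA/A=\frac{A}{A^2+1}\,dt$ with $\frac{A}{A^2+1}\ge\frac25$ on $[1,2]$. It then bounds $1/\sqrt{4-A^2}\ge 1/\sqrt3$ over the whole interval. This gives exactly $\frac{2}{5\sqrt3}\cdot\frac{(3/2)^{2s+2}}{2s+2}$, which is where the constant in $W_{\mathrm{low}}$ comes from.

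So the paper does bound $1/\sqrt{4-A^2}$ by its minimum. What your crude attempt loses is mainly the replacement of $A^{-2s-3}$ by $2^{-2s-3}$, i.e.\ it throws away the Jacobian of $A\mapsto t$, not the square root. Also, the crude constant is $\frac{9}{64}$, short of the target by a factor of about $1.85$, not $\frac{9}{128}$ short by $3.7$. Your repaired route gives a somewhat larger constant but needs a specific numerical check; the paper's route is shorter and exact.
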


\begin{proof}
For $0 < \alpha \le 1$, all factors in~\eqref{eq:Wsmall} are nonnegative when $0 < A < 1$: the two upper parameters of the hypergeometric series are $2 - \alpha > 0$ and $1 - \alpha \ge 0$, while its lower parameter is $L+1 > 0$. At $\alpha = 0$, the limiting value is $W(A) = 0$. Thus

\[
  W(A) \ge 0 , \qquad  0 < A < 1 ,
\]
and the corresponding part of~\eqref{eq:neumann} may be discarded.

For $1 < A < 2$, every coefficient of the hypergeometric series in~\eqref{eq:Wbig} is nonnegative. Hence

\[
  {}_{2}F_{1}\left( \alpha , 2k+3+\alpha ; 2; A^{-2} \right) \ge 1 .
\]
Using $A = 2\cos \vartheta$ in~\eqref{eq:neumann} and retaining only the region $1 < A < 2$, we obtain

\[
  I_{1}(s) \ge \frac{\Gamma (2-\alpha )}{\pi \, 2^{2s+1}\Gamma (2k+2+\alpha )} \int_{1}^{2} \frac{A^{2s}(1-A^{-2})^{2s+1}}{\sqrt{4-A^{2}}} dA .
\]
Now set

\[
  t := A - \frac{1}{A} .
\]
Since

\[
  A^{2s}(1-A^{-2})^{2s+1} = \frac{t^{2s+1}}{A} , \qquad  \frac{dA}{A} = \frac{A}{A^{2}+1} dt ,
\]
we have

\[
  \int_{1}^{2} \frac{A^{2s}(1-A^{-2})^{2s+1}}{\sqrt{4-A^{2}}} dA = \int_{0}^{3/2} t^{2s+1} \frac{A}{A^{2}+1} \frac{dt}{\sqrt{4-A^{2}}} .
\]
For $1 < A < 2$,

\[
  \frac{A}{A^{2}+1} \ge \frac{2}{5} , \qquad  \frac{1}{\sqrt{4-A^{2}}} \ge \frac{1}{\sqrt{3}} ,
\]
and therefore

\[
  \int_{1}^{2} \frac{A^{2s}(1-A^{-2})^{2s+1}}{\sqrt{4-A^{2}}} dA \ge \frac{2}{5\sqrt{3}} \cdot \frac{(3/2)^{2s+2}}{2s+2} .
\]
Finally,

\[
  \frac{(3/2)^{2s+2}}{2^{2s+1}} = \frac{9}{8} \left( \frac{9}{16} \right)^{s} ,
\]
which gives~\eqref{eq:Wlowbound}.
\end{proof}

We have obtained a positive quantitative lower bound for the leading mode. The next section estimates the sum of the remaining odd modes.

\section{The tail}\label{sec:tail}

Section~\ref{sec:leading} gives a positive lower bound for the leading mode. We now estimate the remaining odd modes:

\[
  \sum_{\substack{q \ge 3 \\ q \,\mathrm{odd}}} |I_q(s)| .
\]
Throughout this section,

\[
  k \ge 4, \qquad  N = k+2 \ge 6, \qquad  s = k+\alpha , \qquad  0 \le \alpha \le 1 .
\]
In particular,

\[
  0 < s \le N-1 < qN , \qquad  q \ge 1 .
\]
Recall that

\[
  C_q = \Gamma (qN+1)\Gamma (q(N-1)+1)\Gamma (q+1) ,
\]
and set

\[
  R_q := 2C_q^{1/(2qN)} .
\]
By~\eqref{eq:Psibound},

\[
  |\Psi_q(r)| \le \left( \frac{r}{R_q} \right)^{2qN} .
\]
On the other hand, the standard bound

\[
  |J_\ell (r)| \le 1 , \qquad  \ell = 0,1,2,\dots ,
\]
gives $|\Psi_q(r)| \le 1$. Hence

\[
  |\Psi_q(r)| \le \min \left\{ 1, \left( \frac{r}{R_q} \right)^{2qN} \right\} .
\]

\begin{lemma}\label{res:5.1}
For every integer $q \ge 1$,

\[
  |I_q(s)| \le R_q^{-2s} \left( \frac{1}{2qN - 2s} + \frac{1}{2s} \right) .
\]
\end{lemma}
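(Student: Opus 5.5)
The plan is to estimate $I_q(s)$ directly from its defining integral, using the pointwise majorant $|\Psi_q(r)| \le \min\{1,(r/R_q)^{2qN}\}$ recorded just before the statement. Since $0 < s < qN$, the integral defining $I_q(s)$ converges absolutely (Lemma~\ref{res:3.3}). Therefore
\[
  |I_q(s)| \le \int_0^\infty \min\left\{1,\left(\frac{r}{R_q}\right)^{2qN}\right\} r^{-2s-1}\,dr .
\]
We split the integral at the crossover point $r = R_q$, where the two competing bounds coincide.

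On $0 < r \le R_q$ we use the power bound. This gives
\[
  \int_0^{R_q} R_q^{-2qN} r^{2qN-2s-1}\,dr = R_q^{-2qN}\,\frac{R_q^{2qN-2s}}{2qN-2s} = \frac{R_q^{-2s}}{2qN-2s},
\]
which is finite because $2qN - 2s > 0$; this follows from $s \le N-1 < qN$. On $r \ge R_q$ we use the trivial bound $|\Psi_q| \le 1$, which gives
\[
  \int_{R_q}^\infty r^{-2s-1}\,dr = \frac{R_q^{-2s}}{2s},
\]
finite since $s > 0$. Adding the two pieces yields exactly the claimed inequality.

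There is no real obstacle here. The only points to check are the sign conditions $0 < s < qN$, which make both pieces converge, and that $R_q > 0$ is well defined, which holds since $C_q > 0$. Both are immediate from the standing assumptions of this section.
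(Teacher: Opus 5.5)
Your proposal is correct and is the paper's proof: you bound $|\Psi_q(r)|$ by $\min\{1,(r/R_q)^{2qN}\}$, split the integral at $r=R_q$, and evaluate the two pieces as $R_q^{-2s}/(2qN-2s)$ and $R_q^{-2s}/(2s)$. The conditions you check, $0<s\le N-1<qN$ and $R_q>0$, are exactly those the paper relies on.
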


\begin{proof}
Since $s > 0$ and $qN > s$, splitting the integral at $R_q$ gives

\begin{align*}
  |I_q(s)| &\le \int_{0}^{R_q} \left( \frac{r}{R_q} \right)^{2qN} r^{-2s-1} dr + \int_{R_q}^{\infty } r^{-2s-1} dr \\
  &= R_q^{-2s} \left( \frac{1}{2qN - 2s} + \frac{1}{2s} \right) . \qedhere
\end{align*}
\end{proof}

To make the $q$-dependence explicit, we bound $R_q$ linearly from below.

\begin{lemma}\label{res:5.2}
For every $N \ge 6$ and every integer $q \ge 1$,

\[
  R_q \ge \frac{29}{50} Nq .
\]
\end{lemma}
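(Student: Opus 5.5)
The plan is to bound $C_q$ from below by a Stirling-type inequality. This turns $R_q = 2C_q^{1/(2qN)}$ into an explicit multiple of $Nq$. It then remains to show that the resulting constant is increasing in $N$, so that only the case $N=6$ needs a numerical check.

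\textbf{Step 1 (Stirling lower bound).} I will use the elementary inequality $\Gamma(n+1) \ge (n/e)^n$ for real $n > 0$. One can also keep the factor $\sqrt{2\pi n}\ge 1$, which only helps. Applying it to the three factors with $n = qN$, $q(N-1)$ and $q$ gives
\[
  C_q \ge \Big(\frac{qN}{e}\Big)^{qN}\Big(\frac{q(N-1)}{e}\Big)^{q(N-1)}\Big(\frac{q}{e}\Big)^{q}.
\]
The exponents add up to $2qN$. Taking the $(2qN)$-th root, the factors of $q/e$ combine exactly, and we obtain
\[
  R_q \ge \frac{2q}{e}\, N^{1/2}(N-1)^{\frac{N-1}{2N}}
  = \frac{2}{e}\,Nq\,\Big(1-\frac1N\Big)^{\frac{N-1}{2N}} N^{-\frac{1}{2N}} .
\]
So it suffices to show that
\[
  \phi(N) := \frac{2}{e}\Big(1-\frac1N\Big)^{\frac{N-1}{2N}}N^{-\frac1{2N}} \ge \frac{29}{50}
  \qquad (N \ge 6).
\]

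\textbf{Step 2 (monotonicity in $N$).} I will treat the two $N$-dependent factors separately.
\begin{itemize}
\item The factor $N^{-1/(2N)} = \exp(-\ln N/(2N))$ is increasing for $N \ge 3$, because $\ln x/x$ is decreasing for $x > e$.
\item For the other factor, write $x = 1/N \in (0, 1/6]$. Its logarithm is $\tfrac12(1-x)\ln(1-x)$. The function $(1-x)\ln(1-x)$ has derivative $-\ln(1-x) - 1$, which is negative for $x < 1 - 1/e$. Hence this function decreases in $x$, and therefore increases as $N$ grows.
\end{itemize}
Thus $\phi$ is increasing on $N \ge 6$.

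\textbf{Step 3 (the base case).} It remains to check $\phi(6) \ge 0.58$. Numerically, $2/e \approx 0.7358$, $(5/6)^{5/12} \approx 0.9268$ and $6^{-1/12} \approx 0.8613$, so $\phi(6) \approx 0.5874$. For a rigorous verification I would use crude rational bounds on $e$ and on the logarithms, for instance $\ln(6/5) < 0.1824$ and $\ln 6 < 1.7918$.

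\textbf{Main obstacle.} The only delicate point is the narrow margin at $N = 6$. Here $\phi(6) \approx 0.587$ against $0.58$, so the exponent $(N-1)/(2N)$ must not be simplified to $1/2$; doing so gives about $0.5785$, which fails. If the margin turns out to be too tight in the rigorous arithmetic, I would keep the factor $\sqrt{2\pi n}$ from Stirling's inequality. This adds at least $(2\pi)^{3/(4qN)}$ to the bound and gives extra room at $q = 1$, the only case where the margin is small.
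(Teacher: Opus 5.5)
Your proposal is correct and is essentially the paper's argument: the same bound $n!\ge(n/e)^n$, reduction to a function of $N$ alone, monotonicity in $N$, and a check at $N=6$. The only difference is that the paper first weakens $(1-1/N)^{(N-1)/(2N)}\ge e^{-1/(2N)}$ to get $F(N)=\frac{2}{e}(eN)^{-1/(2N)}$; this trades your larger margin ($\phi(6)\approx 0.587$ versus $F(6)\approx 0.583$) for a one-line monotonicity proof and a base case that reduces to the rational inequality $6(29/50)^{12}e^{13}<2^{12}$. Your fallback remark is slightly off, since $\phi(N)$ does not depend on $q$ and $R_q/(Nq)$ decreases to $\phi(N)$ as $q\to\infty$, so the tight case is large $q$ rather than $q=1$; but the fallback is unnecessary because $\phi(6)>29/50$ already.
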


\begin{proof}
Using $n! \ge \left( \frac{n}{e} \right)^{n}$, $(n \ge 1)$, we obtain

\[
  C_q^{1/(2qN)} \ge \frac{q}{e} N^{1/2}(N-1)^{(N-1)/(2N)} .
\]
Hence

\[
  \frac{R_q}{Nq} \ge \frac{2}{e} \left( \frac{(1-1/N)^{N-1}}{N} \right)^{1/(2N)} \ge \frac{2}{e}(eN)^{-1/(2N)} ,
\]
where we use

\[
  \left( 1 - \frac{1}{N} \right)^{N-1} \ge e^{-1} .
\]
Set

\[
  F(N) := \frac{2}{e}(eN)^{-1/(2N)} .
\]
Since

\[
  \frac{d}{dN} \log F(N) = \frac{\log N}{2N^{2}} > 0 ,
\]
$F$ is increasing for $N > 1$. Thus it is enough to check $N = 6$.

Since

\[
  F(6) > \frac{29}{50} \quad  \Longleftrightarrow \quad  6\left( \frac{29}{50} \right)^{12} e^{13} < 2^{12} ,
\]
it remains to verify the latter inequality. First,

\[
  e = \sum_{j=0}^{\infty } \frac{1}{j!} < \frac{65}{24} + \frac{1}{120}\sum_{j=0}^{\infty } 6^{-j} = \frac{65}{24} + \frac{1}{100} < \frac{68}{25} .
\]
Moreover,

\[
  \left( \frac{29}{50} \right)^{12} < \frac{3}{2000} , \qquad  \left( \frac{68}{25} \right)^{13} < 450000 ,
\]
which follow, for instance, from

\[
  \left( \frac{29}{50} \right)^{4} < \frac{57}{500} , \qquad  \left( \frac{68}{25} \right)^{4} < \frac{219}{4} .
\]
Consequently,

\[
  6\left( \frac{29}{50} \right)^{12} e^{13} < 6 \cdot \frac{3}{2000} \cdot 450000 = 4050 < 4096 = 2^{12} .
\]
Hence $F(6) > \frac{29}{50}$. This proves the lemma.
\end{proof}

Set

\[
  \kappa := \frac{29}{50} .
\]
For the sum over odd integers $q \ge 3$, set

\begin{equation}\label{eq:BN}
  B_N(s) := \frac{1}{6N-2s} + \frac{1}{2s} ,
\end{equation}
and

\begin{equation}\label{eq:Ts}
  T(s) := 1 + \frac{3}{2(2s-1)} .
\end{equation}
Define

\begin{equation}\label{eq:Wtaildef}
  W_{\mathrm{tail}}(k,\alpha ) := (\kappa N)^{-2s} B_N(s) 3^{-2s} T(s) .
\end{equation}

\begin{proposition}\label{res:5.3}
For $k \ge 4$ and $0 \le \alpha \le 1$,

\begin{equation}\label{eq:tailbound}
  \sum_{\substack{q \ge 3 \\ q \,\mathrm{odd}}} |I_q(s)| \le W_{\mathrm{tail}}(k,\alpha ) .
\end{equation}
\end{proposition}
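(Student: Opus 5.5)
The plan is to combine Lemma~\ref{res:5.1} with Lemma~\ref{res:5.2} termwise and then sum a geometric-type series over odd $q \ge 3$.

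\textbf{Single-term bound.} First, for odd $q \ge 3$ we have $2qN - 2s \ge 6N - 2s > 0$, since $s \le N-1$. Hence $\frac{1}{2qN-2s} + \frac{1}{2s} \le B_N(s)$. Lemma~\ref{res:5.1} then gives $|I_q(s)| \le R_q^{-2s} B_N(s)$. Lemma~\ref{res:5.2} gives $R_q \ge \kappa N q$, and since $s > 0$ this yields
\[
  |I_q(s)| \le (\kappa N)^{-2s} B_N(s)\, q^{-2s}, \qquad q \ge 3 \text{ odd}.
\]

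\textbf{Summing over $q$.} It remains to show
\[
  \sum_{\substack{q \ge 3 \\ q \,\mathrm{odd}}} q^{-2s} \le 3^{-2s} T(s).
\]
Keep the $q=3$ term, which equals $3^{-2s}$. Bound the rest, $\sum_{j\ge 2} (2j+1)^{-2s}$, by an integral comparison. Since $x \mapsto (2x+1)^{-2s}$ is decreasing, this sum is at most
\[
  \int_{1}^{\infty} (2x+1)^{-2s}\,dx = \frac{3^{1-2s}}{2(2s-1)},
\]
which is valid because $2s > 1$ (indeed $s \ge 4$). The total is therefore at most $3^{-2s}\bigl(1 + \frac{3}{2(2s-1)}\bigr) = 3^{-2s}T(s)$.

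\textbf{Conclusion.} Multiplying the two bounds gives exactly $W_{\mathrm{tail}}(k,\alpha)$. All the series involved have nonnegative terms, so the bounds may be summed termwise without further justification. The only point requiring care is checking that the integral comparison starts at the right place: it must use the interval $[1,\infty)$ to dominate the terms $q = 5, 7, \dots$. The rest is bookkeeping, so I expect no real obstacle.
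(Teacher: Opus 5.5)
Your proposal is correct and follows the paper's proof. You combine Lemmas~\ref{res:5.1} and~\ref{res:5.2} termwise using $2qN-2s\ge 6N-2s>0$, and then bound the odd-$q$ sum by keeping the $q=3$ term and comparing the rest with an integral. Your $\int_1^\infty (2x+1)^{-2s}\,dx$ is exactly the paper's $\frac12\int_3^\infty x^{-2s}\,dx$ after the substitution $u=2x+1$.
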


\begin{proof}
For $q \ge 3$,

\[
  2qN - 2s \ge 6N - 2s > 0 .
\]
Hence Lemmas~\ref{res:5.1} and~\ref{res:5.2} give

\[
  |I_q(s)| \le (\kappa N)^{-2s} B_N(s) q^{-2s} .
\]
Since $x^{-2s}$ is decreasing,

\begin{align*}
  \sum_{\substack{q \ge 3 \\ q \,\mathrm{odd}}} q^{-2s} &\le 3^{-2s} + \frac{1}{2}\int_{3}^{\infty } x^{-2s} dx \\
  &= 3^{-2s}\left( 1 + \frac{3}{2(2s-1)} \right) = 3^{-2s} T(s) .
\end{align*}
Combining the two estimates proves~\eqref{eq:tailbound}.
\end{proof}

By~\eqref{eq:Wlowbound} and~\eqref{eq:tailbound}, it remains to prove

\begin{equation}\label{eq:compare}
  W_{\mathrm{tail}}(k,\alpha ) < W_{\mathrm{low}}(k,\alpha ) , \qquad  k \ge 4, \quad  0 \le \alpha \le 1 .
\end{equation}
This is proved in Section~\ref{sec:dominate}.

\section{The leading mode dominates the tail}\label{sec:dominate}

We compare the lower bound from Section~\ref{sec:leading} with the upper bound from Section~\ref{sec:tail}. Recall that

\[
  \kappa = \frac{29}{50} , \qquad  N = k+2 , \qquad  s = k+\alpha , \qquad  0 \le \alpha \le 1 .
\]
Set

\[
  R(k,\alpha ) := \frac{W_{\mathrm{tail}}(k,\alpha )}{W_{\mathrm{low}}(k,\alpha )} .
\]
Then~\eqref{eq:compare} is equivalent to

\[
  R(k,\alpha ) < 1 .
\]

By the definitions of $W_{\mathrm{low}}$ and $W_{\mathrm{tail}}$,

\[
  R(k,\alpha ) = \frac{20\sqrt{3}\, \pi }{9} \frac{\Gamma (2k+2+\alpha )}{\Gamma (2-\alpha )} (2s+2) B_N(s) T(s) \left( \frac{16}{81\kappa^{2}N^{2}} \right)^{s} .
\]
Since

\[
  2k+2+\alpha = s+N , \qquad  2-\alpha = N-s ,
\]
this becomes

\[
  R(k,\alpha ) = \frac{20\sqrt{3}\, \pi }{9} \frac{\Gamma (s+N)}{\Gamma (N-s)} (2s+2) B_N(s) T(s) \left( \frac{16}{81\kappa^{2}N^{2}} \right)^{s} .
\]
Set

\[
  C_{0} := \frac{20\sqrt{3}\, \pi }{9} , \qquad  \mu_N := \frac{16}{81\kappa^{2}N^{2}} .
\]
Thus

\begin{equation}\label{eq:Rform}
  R(k,\alpha ) = C_{0} \frac{\Gamma (s+N)}{\Gamma (N-s)} (2s+2) B_N(s) T(s) \mu_N^{s} , \qquad  N-2 \le s \le N-1 .
\end{equation}
We first show that $R(k,\alpha )$ is decreasing in $\alpha$.

\begin{lemma}\label{res:6.1}
For every integer $k \ge 4$,

\[
  R(k,\alpha ) \le R(k,0) , \qquad  0 \le \alpha \le 1 .
\]
\end{lemma}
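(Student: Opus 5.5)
We plan to show that $\log R(k,\alpha)$, viewed through \eqref{eq:Rform} as a function of $s\in[N-2,N-1]$ with $N$ fixed, has nonpositive derivative. Then $R(k,\alpha)\le R(k,0)$ follows at once. We differentiate the factors of \eqref{eq:Rform} one at a time:
\[
\frac{d}{ds}\log R = \psi(s+N)+\psi(N-s) + \frac{1}{s+1} + \frac{B_N'(s)}{B_N(s)} + \frac{T'(s)}{T(s)} + \log\mu_N ,
\]
where $\psi=\Gamma'/\Gamma$. The terms with $\Gamma(N-s)$ in the denominator produce $+\psi(N-s)$ because of the minus sign from the chain rule.

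The factor $T$ is decreasing in $s$, so $T'/T\le 0$. For $B_N$ we write $B_N(s)=\frac{1}{6N-2s}+\frac{1}{2s}$. Differentiating gives $\frac{2}{(6N-2s)^2}-\frac{2}{(2s)^2}$, which is negative because $6N-2s>2s$. So we may discard both $T'/T$ and $B_N'/B_N$ as upper bounds.

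For the remaining terms we use $N-s\in[1,2]$, so $\psi(N-s)\le\psi(2)=1-\gamma<1/2$. We also use $\psi(x)\le\log x$, which gives $\psi(s+N)\le\log(2N-1)<\log(2N)$. Finally $\frac{1}{s+1}\le\frac{1}{N-1}\le\frac15$. It then suffices to show
\[
\log(2N)+\tfrac12+\tfrac15+\log\frac{16}{81\kappa^2N^2}<0,
\]
that is,
\[
\frac{32\,e^{0.7}}{81\kappa^2 N}<1 .
\]
With $\kappa^2=0.3364$ and $e^{0.7}<2.02$, the left side is about $2.37/N$. This is below $1$ for every $N\ge 6$, and the margin is comfortable. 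To keep the argument rigorous, we will state explicit rational bounds: $\psi(2)=1-\gamma<1/2$, and $e^{7/10}<2.1$, which follows from the series for $e^x$.

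The main obstacle is justifying the digamma bounds cleanly. For $\psi(x)\le\log x$, we intend to cite the standard inequality $\psi(x)<\log x-\frac{1}{2x}$ for $x>0$, and for $\psi(2)$ we use its exact value. We also need monotonicity of $\psi$ to pass from $\psi(N-s)$ to $\psi(2)$. If a more elementary route is preferred, we can instead handle the ratio $\Gamma(s+N)/\Gamma(N-s)$ directly: shifting $s$ by $h$ multiplies it by at most $(2N)^{h}$ up to bounded factors, namely $\Gamma(N-s)\ge\Gamma(x_{\min})$ with $\Gamma$ bounded below on $[1,2]$ by about $0.885$. This gives a comparison between $R(k,\alpha)$ and $R(k,0)$ directly rather than a derivative bound, at the cost of an extra constant $1/0.885$, which the slack $N\mu_N\cdot 2N\ll1$ easily absorbs.
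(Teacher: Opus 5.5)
Your argument is correct and is essentially the paper's proof: you differentiate $\log R$ in $s$ via \eqref{eq:Rform}, discard $B_N'/B_N$ and $T'/T$ as negative, bound the digamma terms by logarithms, and absorb $1/(s+1)\le 1/5$. The only difference is that you bound $\psi(N-s)\le\psi(2)=1-\gamma$ separately, whereas the paper applies $\psi(x)<\log x$ to both digamma terms and combines them as $\log\big((s+N)(N-s)\big)\le\log(4N-4)$. Both versions close comfortably for $N\ge 6$.

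Drop the fallback in your last paragraph, though. Replacing $1/\Gamma(2-\alpha)$ by the fixed constant $1/0.885$ cannot be absorbed, because the available slack $(2N\mu_N)^{\alpha}$ tends to $1$ as $\alpha\to 0$. Also, your stated slack $2N^2\mu_N=32/(81\kappa^2)\approx 1.17$ is not small. So that route only gives $R(k,\alpha)\le C\,R(k,0)$ with some $C>1$, not the lemma as stated.
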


\begin{proof}
Since $s = k+\alpha$, differentiation in $\alpha$ is the same as differentiation in $s$, with $N$ fixed. By~\eqref{eq:Rform},

\begin{align*}
  \frac{d}{ds}\log R(k,\alpha ) &= \psi (s+N) + \psi (N-s) + \log \mu_N + \frac{1}{s+1} \\
  &\qquad + \frac{B_N'(s)}{B_N(s)} + \frac{T'(s)}{T(s)} ,
\end{align*}
where $\psi = \Gamma '/\Gamma$. On $N-2 \le s \le N-1$,

\[
  B_N'(s) = \frac{2}{(6N-2s)^{2}} - \frac{2}{(2s)^{2}} < 0 , \qquad  T'(s) = -\frac{3}{(2s-1)^{2}} < 0 .
\]
Strict convexity of $\log \Gamma$ gives, for $x > 0$,

\[
  \psi (x) < \log \Gamma (x+1) - \log \Gamma (x) = \log x ,
\]
where we used $\Gamma (x+1) = x\Gamma (x)$. Using also $(s+N)(N-s) = N^{2}-s^{2} \le 4N-4 < 4N$, we obtain

\begin{align*}
  \frac{d}{ds}\log R(k,\alpha ) &< \log \big( \mu_N(s+N)(N-s) \big) + \frac{1}{s+1} \\
  &< \log \left( \frac{64}{81\kappa^{2}N} \right) + \frac{1}{N-1} \\
  &\le \log \frac{2}{5} + \frac{1}{5} < 0 .
\end{align*}
Indeed, $N \ge 6$ implies

\[
  \frac{64}{81\kappa^{2}N} \le \frac{80000}{204363} < \frac{2}{5} , \qquad  \frac{1}{N-1} \le \frac{1}{5} ,
\]
and the last inequality follows from $\log x \le x-1$.
\end{proof}

It remains to estimate the endpoint value $R(k,0)$. Set

\[
  B_{0}(k) := \frac{1}{4k+12} + \frac{1}{2k} , \qquad  T_{0}(k) := 1 + \frac{3}{2(2k-1)} .
\]
At $\alpha = 0$,

\[
  s = k , \qquad  N-s = 2 , \qquad  s+N = 2k+2 ,
\]
and hence

\[
  \Gamma (N-s) = 1 , \qquad  (2s+2)\Gamma (s+N) = \Gamma (2k+3) .
\]
Thus~\eqref{eq:Rform} becomes

\begin{equation}\label{eq:Rzero}
  R(k,0) = C_{0}\Gamma (2k+3) B_{0}(k) T_{0}(k) \left( \frac{16}{81\kappa^{2}(k+2)^{2}} \right)^{k} .
\end{equation}

\begin{lemma}\label{res:6.2}
For every integer $k \ge 4$,

\[
  R(k,0) < \frac{3}{4}\left( \frac{4}{5} \right)^{k-4} .
\]
\end{lemma}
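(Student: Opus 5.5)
The plan is induction on $k$, working directly from the closed form~\eqref{eq:Rzero}. I will check the base case $k=4$ with explicit rational bounds, and then show that the step ratio satisfies $R(k+1,0)/R(k,0) < 4/5$ for every $k \ge 4$. Together these give $R(k,0) \le R(4,0)\,(4/5)^{k-4} < \frac34 (4/5)^{k-4}$.

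\textbf{Base case $k=4$.} Here $\Gamma(11) = 3628800$, $B_0(4) = \frac{1}{28}+\frac18 = \frac{9}{56}$, $T_0(4) = \frac{17}{14}$ and $\mu_6 = 16/(81\kappa^2\cdot 36)$ with $\kappa = 29/50$. Numerically $C_0 \approx 12.09$ and $\mu_6^4 \approx 7.08\cdot 10^{-8}$, which gives $R(4,0) \approx 0.61$. To make this rigorous I would bound $\sqrt3 < 7/4$ and $\pi < 22/7$ from above and $\kappa^2 = 841/2500$ exactly, so the estimate becomes a comparison of rationals. The margin against $3/4$ is about twenty percent, which leaves ample room for these crude bounds.

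\textbf{Inductive step.} From~\eqref{eq:Rzero},
\[
  \frac{R(k+1,0)}{R(k,0)} = (2k+3)(2k+4)\,\frac{B_0(k+1)}{B_0(k)}\,\frac{T_0(k+1)}{T_0(k)}\cdot\frac{16}{81\kappa^2}\cdot\frac{(k+2)^{2k}}{(k+3)^{2k+2}} .
\]
Both $B_0$ and $T_0$ are decreasing in $k$, so their ratios are at most $1$. I would write the remaining factor as
\[
  \frac{(2k+3)(2k+4)}{(k+3)^2}\left(1-\frac{1}{k+3}\right)^{2k}\cdot\frac{16}{81\kappa^2} .
\]
Its pieces are handled as follows:
\begin{itemize}
  \item the first fraction is at most $4$;
  \item $\log(1-x) \le -x$ gives $(1-\frac1{k+3})^{2k} \le e^{-2k/(k+3)} \le e^{-8/7}$ for $k \ge 4$;
  \item $16/(81\kappa^2) = 40000/68121 < 0.588$.
\end{itemize}
The product is about $4\cdot 0.588\cdot 0.319 \approx 0.75 < 0.8$.

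\textbf{Main obstacle.} The only delicate point is certifying this last numerical inequality rigorously, because the margin is only about six percent. Concretely, it requires
\[
  e^{8/7} > \frac{4\cdot 40000}{0.8\cdot 68121} \approx 2.94 .
\]
I would verify it with a truncated exponential series, $e^{8/7} > 1 + \frac87 + \frac{(8/7)^2}{2} + \frac{(8/7)^3}{6} + \frac{(8/7)^4}{24} \approx 3.10$, which settles it. If more room were wanted, I would instead keep the exact factor $(2k+3)(2k+4)/(k+3)^2 < 4$ together with the $B_0$ and $T_0$ ratios, which are strictly less than $1$.
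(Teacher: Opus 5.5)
Your proposal is correct and matches the paper's proof step for step. The base case at $k=4$ uses rational upper bounds for $\sqrt3$ and $\pi$, and the step ratio $R(k+1,0)/R(k,0)$ is factored exactly as in the paper, with the bounds $4$, $e^{-8/7}$ and $40000/68121$ and the decreasing $B_0,T_0$ ratios discarded. The differences are only in the constants: the paper uses $16/(81\kappa^2)<3/5$ and $e^{8/7}>3$ (four series terms), while you use $0.588$ and $e^{8/7}>2.94$.
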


\begin{proof}
For $k = 4$,

\[
  \Gamma (11) = 10! , \qquad  B_{0}(4) = \frac{9}{56} < \frac{1}{6} , \qquad  T_{0}(4) = \frac{17}{14} < \frac{5}{4} .
\]
Also, using $\pi < 22/7$ and $\sqrt{3} < 7/4$,

\[
  C_{0} < \frac{25}{2} , \qquad  \mu_{6} = \frac{10000}{613089} < \frac{1}{61} .
\]
Hence

\[
  R(4,0) < \frac{25}{2} \cdot 10! \cdot \frac{1}{6} \cdot \frac{5}{4} \cdot \frac{1}{61^{4}} = \frac{9450000}{13845841} < \frac{3}{4} .
\]
For $k \ge 4$, formula~\eqref{eq:Rzero} gives

\[
  \frac{R(k+1,0)}{R(k,0)} = \frac{(2k+4)(2k+3)}{(k+3)^{2}} \frac{16}{81\kappa^{2}} \left( 1 - \frac{1}{k+3} \right)^{2k} \frac{B_{0}(k+1)}{B_{0}(k)} \frac{T_{0}(k+1)}{T_{0}(k)} .
\]
Since $B_{0}$ and $T_{0}$ are positive and decreasing,

\[
  \frac{(2k+4)(2k+3)}{(k+3)^{2}} < 4 , \qquad  \frac{16}{81\kappa^{2}} = \frac{40000}{68121} < \frac{3}{5} ,
\]
and

\[
  \left( 1 - \frac{1}{k+3} \right)^{2k} \le e^{-2k/(k+3)} \le e^{-8/7} < \frac{1}{3} .
\]
The last inequality follows, for instance, from

\[
  e^{8/7} > 1 + \frac{8}{7} + \frac{1}{2}\left( \frac{8}{7} \right)^{2} + \frac{1}{6}\left( \frac{8}{7} \right)^{3} > 3 .
\]
Therefore

\[
  \frac{R(k+1,0)}{R(k,0)} < 4 \cdot \frac{3}{5} \cdot \frac{1}{3} = \frac{4}{5} .
\]
Combining this with the estimate at $k = 4$ proves the lemma.
\end{proof}

Combining the two lemmas, we obtain

\[
  R(k,\alpha ) \le R(k,0) < \frac{3}{4}\left( \frac{4}{5} \right)^{k-4} < 1 .
\]
Thus

\[
  W_{\mathrm{tail}}(k,\alpha ) < W_{\mathrm{low}}(k,\alpha ) .
\]
Together with~\eqref{eq:Wlowbound} and~\eqref{eq:tailbound}, this gives

\[
  \sum_{\substack{q \ge 3 \\ q \,\mathrm{odd}}} |I_q(s)| < I_{1}(s) .
\]

The same estimates give the quantitative approximation

\begin{equation}\label{eq:approx}
  \sum_{\substack{q \ge 1 \\ q \,\mathrm{odd}}} I_q(s) = I_{1}(s)(1 + \eta_{k,\alpha }) , \qquad  |\eta_{k,\alpha }| < \frac{3}{4}\left( \frac{4}{5} \right)^{k-4} .
\end{equation}
The relative error bound is uniform for $0 \le \alpha \le 1$.

\begin{proof}[Proof of Theorem~\ref{res:1.1}]
Let $k \ge 4$, $2k < p < 2k+2$, and write

\[
  s = \frac{p}{2} = k+\alpha , \qquad  0 < \alpha < 1 .
\]
Then

\[
  \sum_{\substack{q \ge 1 \\ q \,\mathrm{odd}}} I_q(s) \ge I_{1}(s) - \sum_{\substack{q \ge 3 \\ q \,\mathrm{odd}}} |I_q(s)| > 0 .
\]
Since the prefactor in~\eqref{eq:Drep} is positive, $D(s) > 0$. By~\eqref{eq:Dnorm},

\[
  \|g_k\|_{L^p(\mathbb{T})} > \|f_k\|_{L^p(\mathbb{T})} . \qedhere
\]
\end{proof}

\appendix

\section{AI usage}\label{app:ai}
\textbf{Statement.} The mathematical argument of this paper was produced by an artificial-intelligence system, not merely checked or edited by one. The system is Apex Math, built by the authors; the language models behind it are Claude Opus 5 and GPT-5.6 Sol. The authors have read and verified the argument of \S\S\ref{sec:reduction}--\ref{sec:dominate}, which is self-contained and can be checked by hand, and take full responsibility for it, including for the parts a machine wrote. No formal verification is claimed. The rest of this appendix records what the system did, and where it was wrong.

Apex Math runs about forty agent processes in parallel behind those two models, and assigns every adversarial re-check across model families. We record here what the system actually did, including where it was wrong, because a reader is entitled to know which parts of a proof were searched for by machine.

An elementary reduction --- from the conjecture to the positivity of a single explicit sum over an integer sequence --- was found first, and it fixed the shape of everything that followed: after it the whole problem is one inequality about $\sigma_m$, with no $p$, no oscillatory integral and no quadrature. Five independent lines of attack were then run in parallel on that inequality, and four of them converged, without being directed to, on the same saddle-point geometry, each measuring a different projection of it: the lattice-path line independently produced the constant $9/16 = \min_{w>0} ((1+w^{3})/(1+w))^{2}$, the induction line located the cancellation at $\alpha = 0$, and the certificate line located the difficulty in an exponentially thin subinterval. The line that closed the problem was the generating-function one, and only after it had been rerouted through the two-torus rather than the circle: the route that won was not the route the system set out on.

The finite range was verified by machine in exact arithmetic. The coefficients $9^m \tau_m$ are constant terms of integer Laurent polynomials and are computed as big integers; the truncation is controlled by an exact rational bound; and positivity on each closed interval is decided in the Bernstein basis over the integers, with Sturm sequences run as an independent cross-check. No floating-point number enters any of these decisions. The implementation was held to a negative control: at $k = 6$ the smallest Bernstein coefficient is negative for every truncation up to $38$ and turns positive only at $39$, so an implementation that reported success at every parameter would have been evidence of a bug rather than of a theorem. This is also the empirical answer to why exact arithmetic is necessary at all: at $k = 50$ a direct high-precision quadrature needs about $140$ digits before the gap becomes visible, and is entirely invisible at $40$.

The system's output was wrong in four identifiable ways, and the corrections are part of the record. The first draft performed the analytic continuation at the level of the sum, where no summation-level identity exists --- the Mellin identity converges absolutely only in a strip in which the Fourier coefficients are not summable over the lattice --- so the continuation had to be redone one mode at a time, which is what \S\ref{sub:3.3} now does; this was a genuine gap and it is closed. A later re-derivation reported that the tail bound had dropped a factor, but the re-derivation was itself the error: the three Bessel orders sum to $2qN$, not $q(2N-1)$, with an explicit counterexample at $k = 1$, $q = 3$, $r = 2.7724565$; the formula in the draft was correct and only the derivation around it was repaired. A standoff over the prefactor, in which two expressions appeared to differ by a factor $\Gamma (s+1)$, was traced not to an algebraic slip but to an object, $\sigma_m$, that the draft had never defined. A control function used the bound $|J_\nu (Ar)| \le (2/(\pi Ar))^{1/2}$, which holds only for $\nu \le 1/2$ and is exceeded by a factor $1.449$ at $\nu = 21$; it was replaced and the estimate reproved. The first draft also asserted that the argument needs no computer assistance; that was false, and it was deleted rather than weakened.

Parts of the argument were submitted to an interactive theorem prover (Lean 4 with Mathlib) in an exploratory attempt at formalisation; that attempt is incomplete, which is why no claim of formal verification is made above. The ancillary scripts distributed with this paper re-check the finite range; they are tools and are not part of the proof.

\end{document}